\newtheorem{theorem}{Theorem}[section]
\newtheorem{lemma}[theorem]{Lemma}
\newtheorem{proposition}[theorem]{Proposition}
\newtheorem{remark}[theorem]{Remark}
\newtheorem{corollary}[theorem]{Corollary}
\newtheorem{definition}[theorem]{Definition}
\begin{document}

\title{\Large \textbf{On certain Iwahori--Hecke modules of $GL_3$ in characteristic $p$}}
\date{}
\author{\textbf{Peng Xu}}
\maketitle

\begin{abstract}
In this note, we show that the natural analogue of certain finiteness result of Barthel--Livn$\acute{\text{e}}$ on $GL_2$ fails for $GL_3$. More precisely, within the pro-$p$-Iwahori invariants of a maximal compact induction of $GL_3$, we show there exist non-zero Iwahori--Hecke submodules of \emph{infinite} codimension.
\end{abstract}

\tableofcontents

\section{Introduction}\label{sec: intro}

In their pioneering work (\cite{B-L94}, \cite{B-L95}), Barthel--Livn$\acute{\text{e}}$ gave a classification of irreducible smooth $\overline{\mathbf{F}}_p$-representations (with central characters) of $GL_2$ over a non-archimedean local field $F$ of residue characteristic $p$. One feature of their work is they proved the existence of Hecke eigenvalues \emph{without the assumption of admissibility}, where a key ingredient in their argument is to show \emph{any} non-zero Iwahori--Hecke submodule of the pro-$p$-Iwahori invariants of a maximal compact induction is \emph{finite codimensional}. The goal of this note is to demonstrate the analogue of such an ingredient \emph{fails} for $G=GL_N (F) (N\geq 3)$.

Let $\mathfrak{o}_F$ be the ring of integers of $F$. We fix a uniformizer $\varpi$ of $F$. Let $K$ be the maximal compact open subgroup $GL_N (\mathfrak{o}_F)$, and $I$ (resp, $I_1$) be the standard (resp, pro-$p$-) Iwahori subgroup of $G$. Let $Z$ be the center of $G$.
\medskip

Let $\sigma$ be an irreducible smooth $\overline{\mathbf{F}}_p$-representation of $K$. Denote also by $\sigma$ its extension to $KZ$ on which $\varpi$ acts trivially. Let $\chi_\sigma$ be the character of $I$ on the line $\sigma^{I_1}$, extended to $IZ$ by requiring $\chi (\varpi)=1$. Our main result is as follows:

\begin{theorem}(Corollary \ref{GL_3 fails})\label{main result: intro}
Assume $N=3$. Then there are non-zero $\mathcal{H} (IZ, \chi_\sigma)$-submodules of $(\textnormal{ind}^G _{KZ} \sigma)^{IZ, \chi_\sigma}$ of infinite codimension.
\end{theorem}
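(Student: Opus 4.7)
The plan is to construct an explicit proper $\mathcal{H}(IZ,\chi_\sigma)$-stable subspace $M \subset V := (\textnormal{ind}^G_{KZ}\sigma)^{IZ,\chi_\sigma}$ whose complement supports infinitely many basis elements.

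First, I would describe a distinguished basis of $V$. Using the Cartan decomposition $G = \bigsqcup_{\lambda \in X_*(T)^+} KZ \cdot \varpi^\lambda \cdot KZ$ together with the Iwahori decomposition of $K$, the $(IZ,\chi_\sigma)$-isotypic projection picks out, for each dominant cocharacter $\lambda$, a controlled number of $IZ$-cosets. Modding out by the centre, for $GL_3$ this yields a basis indexed by the two simple-coroot coordinates, roughly $\{f_{a,b} : (a,b) \in \mathbb{Z}_{\geq 0}^2\}$ (possibly with a small extra finite decoration coming from the $K$-isotypic component, which one checks is harmless).

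Next, I would make the action of the Iwahori--Hecke generators on this basis explicit, either via the Iwahori--Matsumoto generators $T_{s_0}, T_{s_1}, T_{s_2}$ or via the Bernstein presentation $T_{\omega_1}, T_{\omega_2}$ attached to the two fundamental dominant cocharacters. Double-coset calculations in $I w I \cdot \varpi^\lambda KZ$ yield expansions of the form $T \cdot f_{a,b} = f_{a',b'} + (\text{terms with strictly smaller support in a suitable partial order})$, where the leading lattice move $(a,b) \mapsto (a',b')$ is determined by $T$ and lies in the dominant direction. The contrast with $GL_2$ is precisely that here the relevant lattice of cocharacters is two-dimensional, so that ``raising'' can be confined to a genuine sub-cone rather than a single ray.

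Once such a dominance statement is in place, I would pick a proper sub-cone $C \subsetneq \mathbb{Z}_{\geq 0}^2$ stable under all the raising moves produced by the chosen generators (for example $C = \{(a,b) : a \geq 1\}$ or a narrower wedge such as $\{(a,b): a\geq b\geq 1\}$), set $M := \bigoplus_{(a,b)\in C} \overline{\mathbf{F}}_p \cdot f_{a,b}$, and verify $\mathcal{H}$-stability basis element by basis element. The complement contains an infinite family (e.g.\ $\{f_{0,b}\}_{b\geq 0}$), yielding infinite codimension.

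The main obstacle is controlling the lower-order contributions. The finite-Weyl generators $T_{s_1}, T_{s_2}$ act by genuine raising moves in the dominant cone, but the affine generator $T_{s_0}$ can a priori produce subdominant terms whose position in the $(a,b)$-lattice is non-obvious. The heart of the argument is therefore a careful computation of $T_{s_0} \cdot f_{a,b}$ — either directly, by enumerating $IZ$-coset representatives of $I s_0 I \cdot \varpi^\lambda$ and applying $\chi_\sigma$, or indirectly, by passing to the Bernstein generators $T_{\omega_i}$ where positivity is automatic — to verify that the chosen cone $C$ is genuinely preserved. This explicit tracking is exactly the calculation that the earlier sections of the paper should feed into, and once it is done the corollary follows by inspection.
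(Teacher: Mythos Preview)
Your overall strategy---build a lattice-indexed basis, compute the generator action, and exhibit a proper sub-cone whose span is Hecke-stable---is exactly the shape of the paper's argument, and your observation that the two-dimensional cocharacter lattice is the reason this works for $GL_3$ and not $GL_2$ is the right conceptual point. But two aspects of the execution differ from the paper in ways that matter.

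First, and most seriously, you treat the Hecke algebra and the basis uniformly, whereas the paper splits into three cases according to the stabiliser $W(\chi_\sigma)\subseteq W_0$. The algebra $\mathcal{H}(IZ,\chi_\sigma)$ is \emph{not} the full affine Hecke algebra: its generators and relations depend on $W(\chi_\sigma)$ (compare the paper's Propositions for the Iwahori, semi-regular, and regular cases). In particular, your Iwahori--Matsumoto generators $T_{s_1},T_{s_2}$ only exist when $\chi_\sigma$ is fixed by the corresponding simple reflections; in the regular case no finite-Weyl generator survives at all, and the algebra is commutative with six translation-type generators. Likewise the basis of $V$ is indexed not by $\mathbb{Z}_{\geq 0}^2$ with a ``finite decoration'', but by $\bigcup_{\omega\in W(\chi_\sigma)} S_\omega$, which in the Iwahori case is all of $\mathbb{Z}^2$. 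Your proposed cone $C\subset\mathbb{Z}_{\geq 0}^2$ and its stability check would have to be reworked separately in each regime.

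Second, the paper does not attempt to show that a span over a cone is stable. Instead it takes the \emph{cyclic} submodule $M$ generated by a single $f_{Id,\vec{a}}$ (with $\vec a\in S_{Id}$, $a_1<a_2<-1$) and proves that the portion of $M$ supported on $\bigcup_{\vec d\in S_{Id}}KZt_{\vec d}I_1$ is contained in the span of those $f_{Id,\vec b}$ with $\vec b$ \emph{proper} to $\vec a$ (a shifted sub-cone of $S_{Id}$). This is a cleaner bookkeeping device in the non-regular cases: the operator $T_\gamma$ is invertible and moves basis vectors between different chambers $S_\omega$, so a naive span over a cone inside $S_{Id}$ alone is \emph{not} stable---you would have to close it up under the $\langle\gamma\rangle$-action and then recheck $T_{\omega_1}$. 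The cyclic-module formulation avoids this by only asking which $S_{Id}$-supported vectors are reachable, and the case-by-case generator computations show these actions are single shifts (or scalars, or zero) with no lower-order terms. Your worry about subdominant contributions from the affine generator therefore does not materialise; the real work is the trichotomy on $W(\chi_\sigma)$, which your outline does not anticipate.
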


\begin{remark}
This note grows out of an unsuccessful attempt to show the existence of Hecke eigenvalues for $p$-modular representations of $G$. Such a problem is listed as \cite[Question 8]{AHHV17b}, and to our knowledge it is only answered positively for $GL_2$ (\cite{B-L94}) and $U(2, 1)$ (\cite{X2018}).
\end{remark}

\section{Notations and preliminaries}\label{sec: notation and preliminaries}

\subsection{General notations}\label{subsec: notations}
Let $F$ be a non-archimedean local field, with ring of integers $\mathfrak{o}_F$ and maximal ideal $\mathfrak{p}_F$, and let $k_F$ be its residue field of characteristic $p$ and order $q$. Fix a uniformizer $\varpi$ in $F$. Let $G= GL_N (F)$ ($N\geq 3$), $K= GL_N (\mathfrak{o}_F)$, $Z\cong F^\times$ the center of $G$. Let $K_1$ be the kernel of the reduction map $\emph{red}: K\rightarrow GL_N (k_F)$. Denote the latter group by $\mathbb{G}$. Let $\mathbb{B}= \mathbb{T}\ltimes \mathbb{U}$ be the standard Borel subgroup of $\mathbb{G}$, with the subgroup $\mathbb{T}$ of diagonal matrices and the upper unipotent radical $\mathbb{U}$. Let $I$ (resp, $I_1$) be the (resp, pro-$p$-) Iwahori subgroup of $G$, defined as the inverse image of $\mathbb{B}$ (resp, $\mathbb{T}$) in $K$ via $\emph{red}$.

Denote by $\omega_1$ and $\omega_2$ the following elements in $G$:
\begin{center}
$\omega_1= \begin{pmatrix} \beta  & 0  \\ 0  & I_{N-2}
\end{pmatrix}$, $\omega_2= \begin{pmatrix} 0  & I_{N-1}  \\ 1  & 0
\end{pmatrix}$,
\end{center}
where $\beta$ is the $2 \times 2$ matrix $\begin{pmatrix} 0  & 1  \\ 1  & 0 \end{pmatrix}$.

We identify the finite Weyl group $W_0$ of $G$ with the group of permutation matrices. Note that $W_0$ is generated by $\omega_1$ and $\omega_2$.

Put $\gamma= \omega_2\cdot \text{diag}(\varpi, I_{N-1})$. Recall that the normalizer of $I_1$ in $G$ is generated by $I$ and $\gamma$.

\medskip
In this note, all representations are smooth over $\overline{\mathbf{F}}_p$. As $I_1$ is a pro-$p$-sylow subgroup of $I$, any character $\chi$ of $I$ taking values in $\overline{\mathbf{F}}^\times _p$ factors through the finite abelian group $I/I_1\cong \mathbb{T}$. Hence the group $W_0$ acts on the set of characters of $I$, and the conjugate of $\chi$ by an $\omega \in W_0$ is denoted as $\chi^\omega$.

\subsection{Weights}\label{subsec: weights}
Let $\sigma$ be an irreducible smooth representation of $K$. As $K_1$ is pro-$p$ and normal in $K$, $\sigma$ factors through the finite group $\mathbb{G}$, i.e., $\sigma$ is the inflation of an irreducible representation of $\mathbb{G}$. Conversely, any irreducible representation of $\mathbb{G}$ inflates to an irreducible smooth representation of $K$. We may therefore identify irreducible smooth representations of $K$ with irreducible representations of $\mathbb{G}$, and we shall call them \emph{weights} of $K$ or $\mathbb{G}$ from now on.

For a weight $\sigma$ of $K$, it is well-known that $\sigma^{I_{1,K}}$ is one-dimensional (\cite[Theorem 6.12]{C-E2004}).

\section{The Iwahori--Hecke algebra $\mathcal{H}(IZ, \chi)$}

Let $\chi$ be a character of $I$, extended to $IZ$ by requiring $\chi (\varpi)=1$. By \cite[Proposition 5]{B-L94}, the algebra  $\mathcal{H}(IZ, \chi):= \text{End}_G (\text{ind}^G _{IZ} \chi)$ is isomorphic to the convolution algebra $\mathcal{H}_{IZ} (\chi)$ given by:
\begin{center}
$\mathcal{H}_{IZ} (\chi)= \{\varphi: G \rightarrow \overline{\mathbf{F}}_p \mid \varphi(i_1 g i_2)= \chi(i_1 i_2) \varphi(g), \forall i_1, i_2\in IZ, g\in G, \text{smooth~with~compact~support~modulo}~KZ\}$.
\end{center}
Denote by $T_\varphi$ the Hecke operator in $\mathcal{H}(IZ, \chi)$ which corresponds to a function $\varphi\in \mathcal{H}_{IZ} (\chi)$, via the aforementioned isomorphism.

For an element $g\in G$, denote by $\varphi_g$ the function in $\mathcal{H}_{IZ} (\chi)$ supported on $IZ gI$ and satisfying $\varphi(g)=1$. Such condition uniquely determines $\varphi_g$ if it exists. We will write $T_{\varphi_g}$ as $T_g$ for short.

Recall the Iwahori decomposition of $G$:
\begin{equation}\label{Iwahori decom}
G= \bigcup _{\omega\in W_0, \vec{a}\in \mathbb{Z}^{N-1}}~IZ \omega t_{\vec{a}} I
\end{equation}

\begin{lemma}\label{defined condition for var}
There is a non-zero function $\varphi\in \mathcal{H}_{IZ} (\chi)$ supported on $IZ \omega t_{\vec{a}}I$, for some $\omega\in W_0$ and $\vec{a}\in \mathbb{Z}^{N-1}$, if and only if:
\begin{center}
$\chi= \chi^\omega$.
\end{center}
\end{lemma}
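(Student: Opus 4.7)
My strategy is to reformulate the existence of $\varphi$ as a compatibility of $\chi$ with conjugation by $g_0 := \omega t_{\vec{a}}$, and then to reduce that compatibility to the torus by invoking that $I_1$ is the unique maximal pro-$p$ subgroup of $I$. A non-zero $\varphi \in \mathcal{H}_{IZ}(\chi)$ supported on $IZ g_0 I$ is determined by its value at $g_0$ via $\varphi(i_1 g_0 i_2) = \chi(i_1 i_2)\varphi(g_0)$, so existence amounts to this prescription being well-defined, which in turn is easily seen to be equivalent to
\[
\chi(j) = \chi(g_0^{-1} j g_0) \qquad \text{for all } j \in IZ \cap g_0 (IZ) g_0^{-1}.
\]
Centrality of $Z$ together with the convention $\chi(\varpi)=1$ reduce this to the same condition for $j$ in the subgroup $L := I \cap g_0 I g_0^{-1}$ of $I$.

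Necessity is then immediate: the inclusion $T_0 \subset L$ holds because $g_0^{-1} t g_0 = \omega^{-1} t \omega \in T_0$ for $t \in T_0$ (using that $t_{\vec{a}} \in T$ commutes with $T$); testing the displayed condition on $T_0$ gives $\chi(t) = \chi^\omega(t)$ for every $t \in T_0$, and hence $\chi = \chi^\omega$ since $\chi$ factors through $I/I_1 \cong \mathbb{T}$.

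For sufficiency, assume $\chi = \chi^\omega$. Since $T_0 \subset L$ and $T_0$ surjects onto $I/I_1$, we have $L = (L \cap I_1) \cdot T_0$, so it suffices to check the condition on each factor. On $T_0$ it reads $\chi(t) = \chi(\omega^{-1}t\omega)$, which is exactly the hypothesis. On $L \cap I_1$ both sides vanish: for $j \in L \cap I_1$ the conjugate $g_0^{-1} j g_0$ lies in $g_0^{-1} I_1 g_0 \cap I$, which is a pro-$p$ subgroup of $I$; since $|I/I_1| = (q-1)^N$ is coprime to $p$, the unique maximal pro-$p$ subgroup of $I$ is $I_1$, forcing $g_0^{-1} j g_0 \in I_1$, on which $\chi$ is trivial. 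The main point worth flagging is precisely this last step: the unique-maximal-pro-$p$ property of $I_1$ lets me bypass a root-by-root Iwahori factorization of $L$ and cleanly isolate the torus, where the condition $\chi = \chi^\omega$ does all the work.
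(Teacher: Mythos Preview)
Your proof is correct and follows the same route as the paper: both reduce existence of $\varphi$ to the compatibility condition $\chi(i_1)=\chi(i_2)$ whenever $i_1 g_0 = g_0 i_2$ (equivalently, your displayed condition on $L=I\cap g_0 I g_0^{-1}$), and then verify it. The paper leaves that verification as ``some simple computation,'' whereas you spell it out cleanly via the decomposition $L=(L\cap I_1)\cdot T_0$ and the fact that $I_1$ is the unique maximal pro-$p$ subgroup of $I$; this is a perfectly good way to carry out the omitted step (just note that $T_0$, which you use without introduction, means $T(\mathfrak{o}_F)$).
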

\begin{proof}
The only condition on the value of $\varphi$ at $\omega t_{\vec{a}}$ is: for any $i_1, i_2\in I$ satisfying $i_1 \omega t_{\vec{a}}= \omega t_{\vec{a}} i_2$, the identity $\chi(i_1) \varphi(\omega t_{\vec{a}})= \varphi(\omega t_{\vec{a}})\chi(i_2)$ holds. The lemma follows from some simple computation.
\end{proof}

For $\omega \in W_0, \vec{a} \in \mathbb{Z}^{N-1}$, we consider the function $\varphi_{\omega\cdot t_{\vec{a}}}$. Note that $\varphi_{\omega\cdot t_{\vec{a}}}$ is only well-defined under the condition $\chi=\chi^\omega$ (Lemma \ref{defined condition for var}). By Lemma \ref{defined condition for var} and the Iwahori decomposition \eqref{Iwahori decom}, the algebra $\mathcal{H}_{IZ}(\chi)$ has a basis $\{\varphi_{\omega\cdot t_{\vec{a}}}\mid \omega\in W(\chi), \vec{a}\in \mathbb{Z}^{N-1}\}$, where $W(\chi) := \{\omega\in W_0 \mid  \chi=\chi^\omega\}$. As mentioned above, we will write $T_{\varphi_{\omega\cdot t_{\vec{a}}}}$ as $T_{\omega\cdot t_{\vec{a}}}$ for short.

\begin{corollary}\label{basis of I-W}
The set $\{T_{\omega\cdot t_{\vec{a}}}\mid \omega\in W(\chi), \vec{a}\in \mathbb{Z}^{N-1}\}$ consists of a basis of the algebra $\mathcal{H}(IZ, \chi)$.
\end{corollary}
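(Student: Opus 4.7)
The plan is to transfer the question to the convolution model $\mathcal{H}_{IZ}(\chi)$ via the isomorphism recalled from \cite[Proposition 5]{B-L94}, and then combine the Iwahori decomposition with Lemma \ref{defined condition for var} in essentially the obvious way. Everything needed has already been set up, so the corollary should come out as a bookkeeping statement.

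First I would verify that the Iwahori decomposition \eqref{Iwahori decom} is in fact a \emph{disjoint} union, i.e.\ that the double cosets $IZ\,\omega t_{\vec a}\,I$ for $(\omega,\vec a)\in W_0\times\mathbb{Z}^{N-1}$ are pairwise distinct. This is the standard parametrization of $I$-double cosets of $G$ modulo $Z$ by the extended affine Weyl group $W_0\ltimes\mathbb{Z}^{N-1}$, and can be quoted directly. Given this, every $\varphi\in\mathcal{H}_{IZ}(\chi)$ decomposes uniquely as a (possibly infinite) sum of its restrictions to the individual double cosets, and the compact-support-modulo-$KZ$ hypothesis forces only finitely many of these restrictions to be non-zero.

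Next, for each pair $(\omega,\vec a)$, the restriction of $\varphi$ to $IZ\,\omega t_{\vec a}\,I$ lies in the space of $(\chi,\chi)$-equivariant functions on that single double coset. By Lemma \ref{defined condition for var}, this space is zero unless $\omega\in W(\chi)$; when $\omega\in W(\chi)$ it is one-dimensional, since such a function is entirely determined by its value at $\omega t_{\vec a}$ and by the equivariance relation $\chi(i_1)\varphi(\omega t_{\vec a})=\varphi(\omega t_{\vec a})\chi(i_2)$ on the stabilizer, which by the lemma is automatically consistent. Hence the restriction equals a scalar multiple of $\varphi_{\omega\cdot t_{\vec a}}$, and we obtain
\[
\varphi \;=\; \sum_{\omega\in W(\chi),\;\vec a\in\mathbb{Z}^{N-1}} c_{\omega,\vec a}\,\varphi_{\omega\cdot t_{\vec a}},
\]
a finite sum, proving spanning. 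Linear independence is immediate from the disjointness of supports.

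Finally, I would transfer the basis $\{\varphi_{\omega\cdot t_{\vec a}}\mid \omega\in W(\chi),\vec a\in\mathbb{Z}^{N-1}\}$ of $\mathcal{H}_{IZ}(\chi)$ to the basis $\{T_{\omega\cdot t_{\vec a}}\}$ of $\mathcal{H}(IZ,\chi)$ under the $\overline{\mathbf{F}}_p$-algebra isomorphism $\varphi\mapsto T_\varphi$. The only place where any care is needed is the verification of the disjointness of the double cosets and of the fact that equivariance genuinely cuts the space of functions on $IZ\,\omega t_{\vec a}\,I$ down to at most one dimension; both are standard, so I do not expect any real obstacle beyond bookkeeping.
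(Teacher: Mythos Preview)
Your proposal is correct and follows exactly the argument the paper gives: the paragraph immediately preceding the corollary already observes that the Iwahori decomposition \eqref{Iwahori decom} together with Lemma \ref{defined condition for var} yields the basis $\{\varphi_{\omega\cdot t_{\vec a}}\}$ of $\mathcal{H}_{IZ}(\chi)$, and the corollary is just the transport of this statement through the isomorphism $\varphi\mapsto T_\varphi$. You have simply spelled out the disjointness and one-dimensionality steps that the paper leaves implicit.
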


\subsection{The Iwahori--Hecke case}
We consider the Iwahori--Hecke case first (\cite{Vig2005}, \cite{Oll06}).

\begin{proposition}\label{structure of Iwahori-Hecke: degenerate case}
When $W(\chi)=W_0$, the algebra $\mathcal{H}(IZ, \chi)$ is non-commutative and generated by the operators $T_\gamma$ and $T_{\omega_1}$. More precisely, there is an isomorphism of algebras:
\begin{center}
$\mathcal{H}(IZ, \chi) \cong \overline{\mathbf{F}}_p [T_\gamma, T_{\omega_1}]/(T^N _\gamma -1, T^2 _{\omega_1} + T_{\omega_1})$.
\end{center}
\end{proposition}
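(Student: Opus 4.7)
The plan is to verify the two defining relations in $\mathcal{H}(IZ,\chi)$ directly, show that $T_\gamma$ and $T_{\omega_1}$ generate, and then argue the stated presentation is complete.

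For the quadratic relation $T_{\omega_1}^2+T_{\omega_1}=0$, I would compute $\varphi_{\omega_1}\ast\varphi_{\omega_1}$ using the Bruhat-type decomposition $I\omega_1 I\omega_1 I=I\sqcup I\omega_1 I$; the standard expansion yields $T_{\omega_1}^2=(q-1)T_{\omega_1}+qT_1$, and since $q\equiv 0$ in $\overline{\mathbf{F}}_p$ this collapses to $-T_{\omega_1}$. For $T_\gamma^N=1$, the element $\gamma$ normalises $I_1$ (it generates, together with $I$, the normaliser of $I_1$), so each double coset $IZ\gamma^k I$ reduces to a single right coset and $T_\gamma^k=T_{\gamma^k}$. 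A direct matrix computation using $\gamma=\omega_2\cdot\mathrm{diag}(\varpi,I_{N-1})$ and $\omega_2^N=I_N$ gives $\gamma^N=\varpi\cdot I_N\in Z$; with the convention $\chi(\varpi)=1$ this forces $T_\gamma^N=T_{\gamma^N}=1$.

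For generation, I would use the structure of the extended affine Weyl group: modulo the central element $\varpi\cdot I_N$, the group indexing the Iwahori basis decomposes as $\langle\gamma\rangle\ltimes W_{\mathrm{aff}}$, with $\gamma$ of order $N$ and $W_{\mathrm{aff}}$ generated by affine simple reflections $s_0,\ldots,s_{N-1}$ satisfying $\gamma s_i\gamma^{-1}=s_{i+1}$ cyclically. Writing $s_i=\gamma^{i-1}\omega_1\gamma^{-(i-1)}$ and invoking length-additivity $T_wT_{w'}=T_{ww'}$ for reduced products, every basis element $T_{\omega\cdot t_{\vec a}}$ from Corollary \ref{basis of I-W} is expressible as a word in $T_\gamma$ and $T_{\omega_1}$. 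Non-commutativity follows immediately, since $T_{\omega_1}T_\gamma$ and $T_\gamma T_{\omega_1}$ are supported on distinct double cosets.

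The delicate step is to prove the resulting surjection
\[
\overline{\mathbf{F}}_p[T_\gamma, T_{\omega_1}]/(T_\gamma^N-1,\,T_{\omega_1}^2+T_{\omega_1}) \twoheadrightarrow \mathcal{H}(IZ,\chi)
\]
is injective, i.e.\ that no further relations hold. I would introduce a normal form for words in the quotient algebra, namely alternating blocks $T_\gamma^{a_0}T_{\omega_1}T_\gamma^{a_1}\cdots T_\gamma^{a_k}$ with $0\le a_i\le N-1$ and $a_j\neq 0$ for $1\le j\le k-1$, and attempt to set up a bijection with the Iwahori basis $\{T_{\omega\cdot t_{\vec a}}\}$. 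The main obstacle is handling the Coxeter braid identities among the $s_i$: one has to verify that in characteristic $p$ (where $q=0$) the quadratic collapse $T_{\omega_1}^2=-T_{\omega_1}$ together with $T_\gamma^N=1$ already encode all such identities, so that the normal form indeed yields an $\overline{\mathbf{F}}_p$-basis matching the Iwahori basis on the nose.
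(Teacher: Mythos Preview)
The paper does not give a proof; it states the proposition with a citation to Vign\'eras and Ollivier and remarks that it generalises \cite[Proposition 11]{B-L94}. Your verification of the two relations and of generation is correct and standard. But your instinct about the injectivity step is exactly right, and in fact the concern you raise cannot be overcome: the presentation as literally written is \emph{false} for $N\ge 3$. The quotient of the free algebra $\overline{\mathbf{F}}_p\langle x,y\rangle$ by $(x^N-1,\,y^2+y)$ has as basis the alternating words $x^{a_0}y x^{a_1}y\cdots y x^{a_k}$ with $a_j\in\{0,\dots,N-1\}$ and $a_1,\dots,a_{k-1}\neq 0$; for $N\ge 3$ the number of such words grows like $(N-1)^k$, i.e.\ exponentially. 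By contrast, Corollary~\ref{basis of I-W} shows $\mathcal{H}(IZ,\chi)$ has a basis indexed by $W_0\times\mathbb{Z}^{N-1}$, and the number of elements of bounded length in the extended affine Weyl group grows only polynomially (it is virtually abelian). So the surjection you wrote down necessarily has nontrivial kernel, and no normal-form bijection of the kind you propose can exist.

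What is missing from the displayed presentation are precisely the braid relations you anticipated: in terms of $T_{s_i}=T_\gamma^{i-1}T_{\omega_1}T_\gamma^{-(i-1)}$ one needs $T_{s_i}T_{s_{i+1}}T_{s_i}=T_{s_{i+1}}T_{s_i}T_{s_{i+1}}$ (indices mod $N$), together with the commutation relations $T_{s_i}T_{s_j}=T_{s_j}T_{s_i}$ for non-adjacent $i,j$ when $N\ge 4$. These do hold in $\mathcal{H}(IZ,\chi)$ --- indeed the paper itself invokes them later, in the proof of Proposition~\ref{proper in the Iwahori--Hecke case} --- and together with $T_\gamma^N=1$ and $T_{\omega_1}^2+T_{\omega_1}=0$ they give the correct Iwahori--Matsumoto presentation. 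For $N=2$ there are no braid relations in affine type $\tilde A_1$, which is why the $GL_2$ statement in \cite{B-L94} is correct as written; the ``generalisation'' to $N\ge 3$ requires the extra relations. For the purposes of this paper only the generation statement and the individual relations are actually used, so the inaccuracy in the displayed isomorphism does not affect the main results.
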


\begin{remark}
Proposition \ref{structure of Iwahori-Hecke: degenerate case} generalizes \cite[Proposition 11]{B-L94}. Note that the presentation here is different from that in loc.cit, as a different Hecke operator is chosen.
\end{remark}

\subsection{The semi-regular case for $N=3$}

We assume $N=3$.

In this case, we assume $\chi$ is of the form $1\otimes 1\otimes \eta$ for some non-trivial character $\eta$ of $F^\times$, so the group $W(\chi)$ is just $\{Id, \omega_1\}$. By \cite[Proposition 1]{Oll06}, that is essentially the only semi-regular case.

\begin{proposition}(\cite[Theorem 25]{Oll06})\label{structure of Iwahori-Hecke: semi-regular}
The Hecke algebra $\mathcal{H}(IZ, \chi)$ is generated by $T_{\omega_1}, T_{\omega_1 \cdot t_{(1, 0)}}, T_{\omega_1 \cdot t_{(0, -1)}}$. More precisely, the algebra $\mathcal{H}(IZ, \chi)$ is isomorphic to:
\begin{center}
$\overline{\mathbf{F}}_p [T_{\omega_1}, T_{\omega_1 \cdot t_{(1, 0)}}, T_{\omega_1 \cdot t_{(0, -1)}}]/ (T^2 _{\omega_1} +T_{\omega_1}, T_{\omega_1 \cdot t_{(1, 0)}}\cdot T_{\omega_1 \cdot t_{(0, -1)}}, T_{\omega_1 \cdot t_{(0, -1)}}\cdot T_{\omega_1 \cdot t_{(1, 0)}})$.
\end{center}
\end{proposition}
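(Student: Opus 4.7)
The plan is to work from Corollary \ref{basis of I-W}, which under the assumption $W(\chi) = \{Id, \omega_1\}$ gives the explicit basis $\{T_{t_{\vec{a}}}, T_{\omega_1 \cdot t_{\vec{a}}} \mid \vec{a} \in \mathbb{Z}^2\}$ of $\mathcal{H}(IZ,\chi)$. The proposition then reduces to two tasks: verify that the three proposed generators satisfy the three listed relations, and check that they generate the whole algebra with no further relations.

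For the first task, the quadratic relation $T_{\omega_1}^2 + T_{\omega_1} = 0$ is a standard Iwahori--Matsumoto computation. Expanding $T_{\omega_1} \ast T_{\omega_1}$ via the decomposition of the double coset product $IZ \omega_1 I \cdot IZ \omega_1 I$, and using that $\chi = \chi^{\omega_1}$ (since $\omega_1 \in W(\chi)$), one recovers the familiar formula $T_{\omega_1}^2 = q\, T_{Id} + (q-1)\, T_{\omega_1}$, which in characteristic $p$ collapses to $-T_{\omega_1}$. The vanishing products $T_{\omega_1 \cdot t_{(1,0)}} \cdot T_{\omega_1 \cdot t_{(0,-1)}} = 0$ and its reverse require more care: expand each convolution along a set of $I$-coset representatives inside the relevant double cosets, and note that any contribution to $T_{\omega' \cdot t_{\vec{b}}}$ with $\omega' \notin W(\chi)$ must vanish by Lemma \ref{defined condition for var}. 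The remaining contributions, supported on the semi-regular cosets, carry coefficients that are either multiples of $q$ (hence zero in characteristic $p$) or character sums along torus orbits that vanish because $\eta$ is non-trivial; this case-by-case cancellation is the main technical content here.

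For the second task, I would show that every basis element is a polynomial in the three generators modulo the stated relations. Products of the form $T_{\omega_1} \cdot T_{\omega_1 \cdot t_{(1,0)}}$ and $T_{\omega_1} \cdot T_{\omega_1 \cdot t_{(0,-1)}}$, whenever length-additivity in the affine Weyl group holds, produce basis elements of the form $T_{t_{\vec{a}}}$; iterating such products together with right-multiplication by the two translation generators one reaches $T_{t_{\vec{a}}}$ and $T_{\omega_1 \cdot t_{\vec{a}}}$ for all $\vec{a}$ in two opposite chambers of $\mathbb{Z}^2$, and the two chambers together cover the lattice. The main obstacle I anticipate is the affine-Weyl-group bookkeeping: tracking which products are length-additive (giving clean basis elements) versus which trigger a quadratic collision absorbed by the first relation, then patching these cases together to produce uniform polynomial expressions for every basis element. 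This is precisely the semi-regular analysis carried out in Ollivier's \cite[Theorem 25]{Oll06}, which is the cited source and may be invoked directly.
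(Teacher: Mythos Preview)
The paper does not prove this proposition at all: it is stated with a direct citation to \cite[Theorem 25]{Oll06} and no argument is given. Your proposal ultimately lands in the same place by noting the result may be invoked directly from Ollivier, so there is no discrepancy; the sketch you supply beforehand is extra content that the paper simply does not attempt.
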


\subsection{The regular case for $N=3$}

In this case, the group $W(\chi)$ is trivial. The algebra $\mathcal{H}(IZ, \chi)$ has a linear basis $\{T_{t_{\vec{a}}}\mid \vec{a} \in \mathbb{Z}^{N-1}\}$ (Corollary \ref{basis of I-W}).
\begin{proposition}(\cite[Proposition 18]{Oll06})\label{structure of Iwahori-Hecke: regular}
The Hecke algebra $\mathcal{H}(IZ, \chi)$ is commutative and generated by the following six Hecke operators
\begin{center}
$T_{t_{(1, 0)}}, T_{t_{(-1, 0)}}, T_{t_{(0, 1)}}, T_{t_{(0, -1)}}, T_{t_{(1, 1)}}, T_{t_{(-1, -1)}}$
\end{center}
with nine relations:
\begin{center}
$T_{t_{(1, 0)}} \cdot T_{t_{(-1, 0)}}=0, T_{t_{(1, 0)}} \cdot T_{t_{(0, 1)}}=0, T_{t_{(1, 0)}} \cdot T_{t_{(-1, -1)}}=0,$

$T_{t_{(0, 1)}} \cdot T_{t_{(0, -1)}}=0, T_{t_{(0, 1)}} \cdot T_{t_{(-1, -1)}}=0, T_{t_{(1, 1)}} \cdot T_{t_{(-1, -1)}}=0,$

$T_{t_{(1, 1)}} \cdot T_{t_{(0, -1)}}=0, T_{t_{(1, 1)}} \cdot T_{t_{(-1, 0)}}=0, T_{t_{(-1, 0)}} \cdot T_{t_{(0, -1)}}=0$.
\end{center}
\end{proposition}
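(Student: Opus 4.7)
The plan is to compute all products $T_{t_{\vec a}} \cdot T_{t_{\vec b}}$ in the basis of Corollary~\ref{basis of I-W} via the convolution formula
\[
(\varphi_1 \ast \varphi_2)(g) = \sum_{x \in G/IZ} \varphi_1(x)\,\varphi_2(x^{-1}g),
\]
with coset representatives drawn from the Iwahori decomposition \eqref{Iwahori decom}, and to establish the uniform product rule
\[
T_{t_{\vec a}} \cdot T_{t_{\vec b}} = \begin{cases} T_{t_{\vec a+\vec b}} & \text{if } \ell(t_{\vec a+\vec b}) = \ell(t_{\vec a}) + \ell(t_{\vec b}), \\ 0 & \text{otherwise,} \end{cases}
\]
where $\ell$ is the length on the extended affine Weyl group $W_0 \ltimes \mathbb{Z}^{N-1}$; for $N=3$ this reads $\ell(t_{(a,b)}) = |a|+|b|+|a-b|$. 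As the right-hand side is symmetric in $(\vec a, \vec b)$, commutativity of $\mathcal{H}(IZ,\chi)$ is an immediate consequence.

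The length-additive half of the rule is the standard concatenation statement: a reduced word for $t_{\vec a+\vec b}$ can be built by juxtaposing reduced words for $t_{\vec a}$ and $t_{\vec b}$, and the braid relations in the affine Hecke algebra propagate this into the equality $T_{t_{\vec a}}\, T_{t_{\vec b}} = T_{t_{\vec a+\vec b}}$. The non-additive half is driven by characteristic $p$: whenever a reduced expression must be shortened by a simple affine reflection $s$, the Iwahori--Matsumoto quadratic relation $T_s^2 = (q-1)T_s + q T_e$ degenerates in $\overline{\mathbf{F}}_p$ to $T_s^2 = -T_s$. Iterating, the convolution is pushed into double cosets $IZ\,\omega t_{\vec c}\, I$ with $\omega \in W_0 \setminus \{e\}$, each of which supports only the zero function in $\mathcal{H}_{IZ}(\chi)$ by Lemma~\ref{defined condition for var}, since $W(\chi) = \{e\}$ in the regular case.

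Given the product rule, the proposition reduces to finite bookkeeping. For the nine vanishing relations, one tabulates the six generators against one another and checks that the length equation fails precisely for the listed pairs: e.g.\ $\ell(t_{(1,0)}) + \ell(t_{(-1,-1)}) = 2+2 = 4$ while $\ell(t_{(0,-1)}) = 2$, and similarly for the other eight cases, all generators having length $2$. For generation, each $\vec a \in \mathbb{Z}^2$ lies in one of the six closed chambers cut out by the lines $a=0$, $b=0$, $a=b$, and inside each chamber $t_{\vec a}$ admits a length-additive factorization as a product of the two fundamental translations bounding that chamber.

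The principal obstacle is the non-additive half of the product rule in the $(IZ,\chi)$ setting, where one must carefully track how the character $\chi$ interacts with the $I$-coset decomposition of each $I\omega t_{\vec c} I$; this is precisely the combinatorial computation carried out in detail in \cite{Oll06}. Once one accepts Ollivier's decomposition results, the remainder of Proposition~\ref{structure of Iwahori-Hecke: regular} is formal.
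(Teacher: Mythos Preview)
The paper gives no proof of this proposition; it is quoted directly from \cite[Proposition~18]{Oll06}. Your outline is therefore already more than the paper itself provides, and it matches the strategy in \cite{Oll06}: the length-additive braid relation $T_{t_{\vec a}} T_{t_{\vec b}} = T_{t_{\vec a + \vec b}}$, together with the observation that in the regular case the correction terms arising from a non-additive product are supported on cosets $IZ\,\omega t_{\vec c}\,I$ with $\omega \neq e$ and hence vanish by Lemma~\ref{defined condition for var}. Your length formula $\ell(t_{(a,b)}) = |a|+|b|+|a-b|$ and the check that exactly the nine listed pairs fail additivity are correct.

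One small caveat worth noting: the quadratic relation $T_s^2 = -T_s$ you invoke is a relation in the full Iwahori (or pro-$p$-Iwahori) Hecke algebra, not in $\mathcal{H}(IZ,\chi)$ itself, since for regular $\chi$ the operator $T_s$ does not even belong to $\mathcal{H}(IZ,\chi)$ (Lemma~\ref{defined condition for var}). The reduction you describe therefore really takes place in that larger algebra before one projects to the $\chi$-isotypic piece, which is indeed how \cite{Oll06} proceeds. You already flag this by deferring the ``principal obstacle'' to Ollivier, so the sketch is honest about where the work lies.
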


\section{The pro-$p$-Iwahori invariants of $\text{ind}^G _{KZ} \sigma$}\label{sec: I_1-invariants}
Let $\sigma$ be a weight of $K$. We extend $\sigma$ to a representation of $KZ$ by requiring $\varpi$ to act trivially.  Let $\text{ind}_{KZ} ^G \sigma$ be the smooth representation compactly induced from $\sigma$, i.e., the representation of $G$ with underlying space $S(G, \sigma)$
\begin{center}
$S(G, \sigma)=\{f: G\rightarrow \sigma\mid  f(kg)=\sigma (k)\cdot f(g), \forall~k\in KZ, g\in G,~\text{smooth~with~supp~compact~modulo}~KZ\}$
\end{center}
and $G$ acting by right translation.

In this section, we determine a basis of the $I_1$-invariants of a maximal compact induction $\text{ind}^G _{KZ} \sigma$. Compare with \cite{Oll2015}.

\subsection{The Iwasawa--Iwahori decomposition of $G$}\label{subsec: Iwasawa--Iwahori decomposition}

Recall the following Iwasawa--Iwahori decomposition of $G$:

\begin{equation}\label{Iwahori-Hecke decom}
G= \bigcup _{\vec{a}\in \mathbb{Z}^{N-1}}~KZ \cdot t_{\vec{a}}\cdot I_1 ,
\end{equation}
where $t_{\vec{a}}$ denotes the diagonal matrix:
\begin{center}
$t_{\vec{a}}= \text{diag}(\varpi^{a_1}, \ldots, \varpi^{a_{n-1}}, 1)$.
\end{center}

By \eqref{Iwahori-Hecke decom}, a function $f$ in $\text{ind}^G _{KZ}\sigma$ invariant under the action of $I_1$ is uniquely determined by its values on all diagonal matrices $t_{\vec{a}}$ with $\vec{a}\in \mathbb{Z}^{N-1}$.

\subsection{A decomposition of $\mathbb{Z}^{N-1}$}\label{subsec: decomposition of Z^n-1}
\begin{definition}
For an $\omega\in W_0$, denote by $S_\omega$ the subset of $\mathbb{Z}^{N-1}$ given by:
\begin{center}
$S_\omega= \{\vec{a}\in \mathbb{Z}^{N-1} \mid (\omega t_{\vec{a}}\cdot I \cdot (\omega t_{\vec{a}})^{-1})\cap K \subseteq I \}$.
\end{center}
\end{definition}

\begin{lemma}\label{combin lemma}
We have:

$(1)$.~For any $\vec{a}\in S_\omega$, we have
\begin{center}
$I= K_1 \cdot (\omega t_{\vec{a}}\cdot I \cdot (\omega t_{\vec{a}})^{-1}\cap K)$.
\end{center}

$(2)$.~There is a disjoint decomposition:
\begin{center}
$\mathbb{Z}^{N-1}= \bigcup_{\omega\in W_0} S_\omega$.
\end{center}

\end{lemma}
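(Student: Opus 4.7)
The plan is to compute $\omega t_{\vec{a}}\, I\, (\omega t_{\vec{a}})^{-1} \cap K$ explicitly via the Iwahori factorization of $I$, and read off both claims from the resulting formula. Factoring $I = \prod_{i<j} U_{ij}(\mathfrak{o}_F) \cdot \mathbb{T}(\mathfrak{o}_F) \cdot \prod_{i>j} U_{ij}(\varpi\mathfrak{o}_F)$, with $U_{ij}$ the root subgroup attached to $e_i - e_j$, conjugation by $\omega t_{\vec{a}}$ permutes labels by $\omega$ and shifts each root-valuation by $a_i - a_j$ (using the convention $a_N = 0$); intersecting each factor with $K$ caps the shifted valuation at $0$. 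This yields an Iwahori-type factorization of $\omega t_{\vec{a}}\, I\, (\omega t_{\vec{a}})^{-1} \cap K$ whose exponents $c_{kl}^+ = \max(c_{kl}, 0)$ are explicit affine functions of $\vec{a}$ depending on $\omega$.

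From this formula the defining condition for $S_\omega$ translates into two families of inequalities: $a_i \geq a_j + 1$ for pairs with $i < j$ and $\omega(i) > \omega(j)$, and $a_i \geq a_j$ for pairs with $i > j$ and $\omega(i) > \omega(j)$. These are exactly the conditions saying that $\omega$ is the unique permutation ranking the sequence $(a_1, \ldots, a_{N-1}, 0)$ in decreasing order, with ties in the $a_i$'s broken by preferring the larger index. Such a ranking exists and is unique for every $\vec{a}$, which settles part (2); the uniqueness direction uses that the two defining inequalities admit tight converses once $\omega$ is fixed as this ranking permutation.

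For part (1), I would use that $K_1$ is normal in $K$ with $K_1 \subseteq I$ and $I/K_1 \cong \mathbb{B}$, which reduces the desired equality to showing that the image of $\omega t_{\vec{a}}\, I\, (\omega t_{\vec{a}})^{-1} \cap K$ in $\mathbb{B}$ is all of $\mathbb{B}$. In the factorization, the lower-triangular factors reduce to the identity (their exponents $c_{kl}^+$ are $\geq 1$, precisely because $\vec{a} \in S_\omega$), while an upper-triangular factor $U_{kl}(\varpi^{c_{kl}^+}\mathfrak{o}_F)$ reduces onto $U_{kl}(k_F)$ exactly when $c_{kl}^+ = 0$. Using the ranking characterization of $\omega$, a direct check shows $c_{kl} \leq 0$ for all $k < l$, so every upper root subgroup contributes fully and, together with $\mathbb{T}(\mathfrak{o}_F) \twoheadrightarrow \mathbb{T}(k_F)$, the image is indeed the full Borel $\mathbb{B}$.

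The main obstacle is combinatorial bookkeeping: tracking how conjugation by $\omega t_{\vec{a}}$ simultaneously permutes and rescales root subgroups, keeping straight which factors originated as positive (with $\mathfrak{o}_F$-coefficients) versus negative (with $\varpi\mathfrak{o}_F$), and checking that the same ranking characterization of $\omega$ controls both the inequalities defining $S_\omega$ and their converses needed for the reduction-mod-$\varpi$ surjectivity. The underlying sorting combinatorics is elementary; it is the valuation conventions for the shifted root subgroups that require care.
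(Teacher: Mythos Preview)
Your approach is correct and, for part~(2), cleaner than the paper's. Both arguments rest on the same entrywise description of $\omega t_{\vec a}\,I\,(\omega t_{\vec a})^{-1}$: the $(k,l)$-entries run over $\mathfrak{p}_F^{c_{kl}}$ for explicit affine functions $c_{kl}$ of $\vec a$, and the paper records exactly your relation $c_{kl}+c_{lk}=1$. For part~(1) the two proofs are the same computation in different clothing: the paper shows $B\cap K\subseteq \omega t_{\vec a}\,I\,(\omega t_{\vec a})^{-1}\cap K$ by checking $c_{kl}\le 0$ for $k<l$, which is precisely your surjectivity-onto-$\mathbb{B}$ check after reduction modulo $K_1$.

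The real difference is in part~(2). You identify the defining inequalities of $S_\omega$ with the statement that $\omega$ is the stable-sort permutation of $(a_1,\dots,a_{N-1},0)$, and the disjoint decomposition then drops out in one line. The paper instead establishes disjointness, non-emptiness of each $S_\omega$, and covering by three separate arguments: disjointness via the observation that distinct $\omega,\omega'$ force $c^\omega_{ij}=c^{\omega'}_{j'i'}$ for some inversion pair; non-emptiness by an induction on words in the generators $\omega_1,\omega_2$ of $W_0$; and covering by explicitly running a selection-sort procedure on $\vec a$ to produce the right $\omega$. Your route bypasses the induction entirely and makes the sorting picture explicit from the start; the paper's route has the virtue of being more self-contained (no appeal to the Iwahori factorization as a group-theoretic product), but is longer.

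One small caution: your phrase ``decreasing order, ties broken by preferring the larger index'' has the orientation flipped relative to the inequalities you wrote down. Tracing your conditions, $\omega(i)>\omega(j)$ forces $a_i\ge a_j$, with strict inequality when $i<j$; so among equal $a$-values the \emph{smaller} original index receives the \emph{smaller} value of $\omega$, i.e.\ $\omega$ is the stable sort of $(a_1,\dots,a_N)$ into non-decreasing order. This does not affect the argument, but you should align the prose with the inequalities before writing it out.
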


\begin{proof}
Denote by $I(\omega, \vec{a})$ the group $\omega t_{\vec{a}}\cdot I \cdot (\omega t_{\vec{a}})^{-1}$. By definition, if $\vec{a} \in S_\omega$ then $I(\omega, \vec{a}) \cap K$ is contained in $I$.

We start by some general remarks. For a pair $(i, j)$, all $(i, j)$-entries of $I(\omega, \vec{a})$ consist of $\mathfrak{p}^{l^\omega _{ij}} _F$, for some $l^\omega _{ij} \in \mathbb{Z}$. We view $l^\omega _{ij}$ as functions of $\vec{a}$. Some simple observations are given first:

a).~$l^\omega _{ij} = 1- l^\omega _{ji}$ (for $(i, j)$ with $i\neq j$).

b).~If $(i, j)\neq (i', j')$, then $l^\omega _{ij} \neq  l^\omega _{i'j'}$ (as functions of $\vec{a}$).

c).~ $S_\omega= \{\vec{a}\in \mathbb{Z}^{n-1}\mid l^\omega _{ij} (\vec{a})\geq 1~\text{for}~(i, j)~\text{with}~i> j\}$.

\medskip
For $(1)$, it suffices to check that $B\cap K \subseteq I(\omega, \vec{a})\cap K$ for $\vec{a} \in S_\omega$, as we know $I= K_1 (B\cap K)$.  By $a)$ and $c)$ above we have $l^\omega _{ji}= 1- l^\omega _{ij} \leq 0$ for all $(j, i)$ with $j< i$, as required.

For $(2)$, \emph{we show first the decomposition is disjoint.} Note that if $\omega \neq \omega'$, then there exist one pair $(i, j)$ with $i< j$ and another pair $(i', j')$ with $i'< j'$, such that:
\begin{center}
$l^\omega _{ij} = l^{\omega'}_{j'i'}$,
\end{center}
from which and $a), c)$ above we see $S_\omega \cap S_{\omega'} =\varnothing$.

Before moving on, we record the set $S_\omega$ for a few special $\omega$:
\begin{center}
$S_{Id}= \{\vec{a}\in \mathbb{Z}^{n-1}\mid a_1 \leq a_2 \leq ... \leq a_{n-1} \leq 0\}$,

$S_{\omega_1}= \{\vec{a}\in \mathbb{Z}^{n-1}\mid a_2 +1 \leq a_1 \leq a_3 \leq ... \leq a_{n-1}\leq 0\}$,

$S_{\omega_2}= \{\vec{a}\in \mathbb{Z}^{n-1}\mid a_2 \leq a_3 \leq... \leq a_{n-1}\leq 0 < a_1\}$.

\end{center}

\emph{Now we prove $S_\omega \neq \varnothing$ for any $w \in W_0$.}

We claim first that $S_\omega \neq \varnothing$ implies $S_{\omega\omega'} \neq \varnothing$ for any $\omega' \in \langle \omega_2\rangle$. The observation here is for such an $\omega'$ one may find a diagonal matrix $\tilde{t}$ of the form $\text{diag} (\varpi^{a_1}, ..., \varpi^{a_n})$ so that $\omega' \tilde{t}$ normalizes $I$. More precisely, assume we already have:
\begin{equation}\label{inclusion giving S_omega}
t_{\vec{a}} I t^{-1} _{\vec{a}} \cap K \subset \omega^{-1}I\omega
\end{equation}
for some $t_{\vec{a}}$. Conjugating the above by $\omega'$ we get:
\begin{center}
$\omega'^{-1}t_{\vec{a}} I t^{-1} _{\vec{a}}\omega' \cap K \subset \omega'^{-1}\omega^{-1}I\omega\omega'$.
\end{center}
Here, the left side term can be rewritten as
\begin{center}
$\omega'^{-1}t_{\vec{a}} \omega' \tilde{t} (\tilde{t}^{-1}\omega'^{-1} I \omega'\tilde{t}) \tilde{t}^{-1}\omega'^{-1}t_{\vec{a}}^{-1} \omega' \cap K$.
\end{center}
Note that $\omega'^{-1}t_{\vec{a}} \omega' \tilde{t}$ is also a diagonal matrix of the form (uniquely) $\varpi^l \cdot t_{\vec{b}}$ for some $l \in \mathbb{Z}$ and $t_{\vec{b}}$. In all we get
\begin{center}
$t_{\vec{b}} I t^{-1} _{\vec{b}} \cap K  \subset \omega'^{-1}\omega^{-1}I\omega\omega'$
\end{center}
and conclude that $\vec{b} \in S_{\omega \omega'}$.

Secondly, a parallel claim holds for $\omega_1$: if $\vec{a} \in S_\omega $ such that $a_1 \neq a_2$, then $\omega_1 \cdot \vec{a}= (a_2, a_1, ..., a_{N-1}) \in S_{\omega \omega_1} $.

$1)$ $a_1 > a_2$. The condition $\vec{a} \in S_\omega$ means that
\begin{center}
$t_{\vec{a}} I t^{-1} _{\vec{a}} \cap K \subset \omega^{-1} I \omega$.
\end{center}
So by conjugating we have
\begin{center}
$\omega_1 t_{\vec{a}}\omega_1 (\omega_1 I \omega_1) \omega_1 t^{-1} _{\vec{a}} \omega_1 \cap K \subset (\omega_1\omega)^{-1} I \omega \omega_1$.
\end{center}
Note that the group $I$ and $\omega_1 I \omega_1$ are only different at $(12)$ and $(21)$ entries. The assumption here implies the left side group above, i.e., $\omega_1 I(Id, \vec{a})\omega_1 \cap K$, strictly contains
\begin{center}
$\omega_1 t_{\vec{a}}\omega_1 I \omega_1 t^{-1} _{\vec{a}} \omega_1 \cap K$.
\end{center}
Combined with the containing the claim in this case follows.

$2)$.~$a_1 < a_2$. By conjugating we have the same:
\begin{center}
$\omega_1 t_{\vec{a}}\omega_1 (\omega_1 I \omega_1) \omega_1 t^{-1} _{\vec{a}} \omega_1 \cap K \subset (\omega_1\omega)^{-1} I \omega \omega_1$.
\end{center}
The assumption now implies the $(21)$ (resp., $(12)$)-entries of $\omega_1 I(Id, \vec{a})\omega_1$ is $\mathfrak{o}_F$ (resp., $\mathfrak{p}^{a_2 -a_1 +1}_F$). By the containing above, the $(21)$-entries of $(\omega_1\omega)^{-1} I \omega \omega_1$ is also $\mathfrak{o}_F$, and thus its $(12)$-entries gives us $\mathfrak{p}_F$. However,  the $(12)$ (resp., $(21)$)-entries of $I(Id, \omega_1 t_{\vec{a}}\omega_1)$ is $\mathfrak{p}^{a_2 -a_1}_F$ (resp., $\mathfrak{p}_F$). This is enough to see (note that $a_1 < a_2$)
\begin{center}
$\omega_1 t_{\vec{a}}\omega_1 I \omega_1 t^{-1} _{\vec{a}} \omega_1 \cap K \subset (\omega_1\omega)^{-1} I \omega \omega_1$,
\end{center}
as required by the claim.

\medskip

Recall that the group $W_0$ is generated by $\omega_1$ and $\omega_2$. Starting by the special case $\omega= Id$ above (which clearly contains an $\vec{a}$ such that $a_1 \neq a_2$), the assertion $S_\omega \neq \varnothing$ for any $\omega \in W_0$ follows from a simple induction argument and the two claims we have just verified.

\medskip

\emph{Finally, we show any $\vec{a} \in \mathbb{Z}^{n-1}$ belongs to some $S_\omega$}. We find an $\omega$ such that
\begin{center}
$\omega t_{\vec{a}} \omega^{-1}= \varpi^l t_{\vec{b}}$
\end{center}
for some $l \in \mathbb{Z}$ and $\vec{b}\in S_{Id}$. It is easy to see $l$ and $\vec{b}$ are unique, but in general it is not true for $\omega$. We will generate the right one below.

Put $a_N= 0$. Recall the diagonal matrix $t_{\vec{a}}$ is
\begin{center}
$(\varpi^{a_1}, \varpi^{a_2}, ..., \varpi^{a_N}).$
\end{center}
We write $t_{\vec{a}}$ as $t_{(0)}= (\varpi^{a_{0,1}}, \varpi^{a_{0,2}}, ..., \varpi^{a_{0,N}})$.

For step $1$, we conjugate $t_{(0)}$ by a unique $\omega_{(1)}$ to swap $\varpi^{a_{0, j_0}}$ with $\varpi^{a_{0, N}}$ for the $j_0$ such that $a_{0, j_0} =\text{max} \{a_{0, m} \mid 1\leq m \leq N\}$ and $j_0$ is the largest index satisfying that. We obtain a diagonal matrix $t_{(1)}$.

Suppose step $i$ is done, and we get a diagonal matrix $t_{(i)}= (\varpi^{a_{i, 1}}, \varpi^{a_{i, 2}}, ..., \varpi^{a_{i, N}})$. For step $i+1$, we conjugate $t_{(i)}$ by a unique $\omega_{(i+1)}$ to swap $\varpi^{a_{i, j_i}}$ with $\varpi^{a_{i, N-i}}$ for the $j_{i}$ such that $a_{i, j_i}=\text{max}\{a_{i, m} \mid 1\leq m \leq N-i\}$ and $j_i$ is the largest index satisfying that. We obtain a diagonal matrix $t_{(i+1)}$. Note that if $j_i = N- i$, we have nothing to do and $\omega_{(i+1)} =Id$.

We get the desired $\omega$ as the product $\omega_{(N-1)}\cdot \omega_{(N-2)}... \cdot \omega_{(1)}$.

As $\vec{b} \in S_{Id}$, we get
\begin{equation}\label{containing for a in S_omega}
\omega t_{\vec{a}} \omega^{-1} I \omega t^{-1} _{\vec{a}} \omega^{-1} \cap K \subset I.
\end{equation}
We show that $\vec{a} \in S_{\omega}$ for the element $\omega$ generated above, i.e., we have
\begin{center}
$\omega t_{\vec{a}}  I t^{-1} _{\vec{a}}\omega^{-1}  \cap K \subset I$.
\end{center}

It suffices to compare the group $I$ with $\omega^{-1}I \omega$. We consider the pairs $(j, i)$ with $j< i$ so that conjugating by $\omega^{-1}$ swaps the $(ij)$-entry of a matrix with its $(j'i')$-entry for some $j'< i'$. So $(j', i')$ is also a such pair. We must have $a_i \leq a_j$ for every such pair. Indeed, if $a_i > a_j$ happens, the $(ij)$-entries of the left side of \eqref{containing for a in S_omega} is $\mathfrak{o}_F$. This contradicts \eqref{containing for a in S_omega}. If we have further $a_i < a_j$ for every such pair, we are done. This is because the $(ij)$-entries of $I(\omega, \vec{a})\cap K$ is $\mathfrak{p}^{a_{j'} -a_{i'}} _F$. However, the situation $a_j= a_i$ for some pair $(j, i)$ with $j < i$ is already excluded from the process we find $\omega$.

\medskip

The argument of the Lemma is completed.
\end{proof}

For our later purpose, we display the Lemma explicitly for $N=3$.
\begin{center}
$\omega= Id= \begin{pmatrix} 1  & 0 & 0 \\ 0  & 1 & 0\\
0 & 0 & 1
\end{pmatrix}, S_\omega=\{(a_1, a_2)\in \mathbb{Z}^2 \mid a_1 \leq a_2 \leq 0\}$;

$\omega= \omega_1= \begin{pmatrix} 0  & 1 & 0 \\ 1  & 0 & 0\\
0 & 0 & 1
\end{pmatrix}, S_\omega=\{(a_1, a_2)\in \mathbb{Z}^2 \mid a_2 +1 \leq a_1 \leq 0\}$;

$\omega=  \omega_1 \omega_2= \begin{pmatrix} 0  & 0 & 1 \\ 0  & 1 & 0\\
1 & 0 & 0
\end{pmatrix}, S_\omega=\{(a_1, a_2)\in \mathbb{Z}^2 \mid  a_1 \geq a_2 +1 \geq 2\}$;

$\omega= \omega_2 \omega_1= \begin{pmatrix} 1  & 0 & 0 \\ 0  & 0 & 1\\
0 & 1 & 0
\end{pmatrix}, S_\omega=\{(a_1, a_2)\in \mathbb{Z}^2 \mid a_1 \leq 0, a_2 \geq 1\}$;

$\omega=\omega_2= \begin{pmatrix} 0  & 1 & 0 \\ 0  & 0 & 1\\
1 & 0 & 0
\end{pmatrix}, S_\omega=\{(a_1, a_2)\in \mathbb{Z}^2 \mid a_1 \geq 1, a_2 \leq 0\}$;

$\omega= \omega^2 _2= \begin{pmatrix} 0  & 0 & 1 \\ 1  & 0 & 0\\
0 & 1 & 0
\end{pmatrix}, S_\omega=\{(a_1, a_2)\in \mathbb{Z}^2 \mid a_2 \geq a_1 \geq 1\}$.
\end{center}

\medskip

\subsection{A basis of $(\textnormal{ind}^{G} _{KZ}\sigma)^{I_1}$}\label{subsec: basis of I_1 of maximal compact induction}
We fix a non-zero vector $v_0$ in the line $\sigma^{I_1}$. The Iwahori subgroup $I$ acts as a character on $\sigma^{I_1}$, for which we denote it by $\chi_\sigma$.

By $(2)$ of Lemma \ref{combin lemma}, the Iwasawa--Iwahori decomposition \eqref{Iwahori-Hecke decom} is re-written as
\begin{center}
$G= \bigcup_{\omega\in W_0} \bigcup_{\vec{a}\in S_\omega} KZ\cdot t_{\vec{a}} I_1$.
\end{center}

For an $\omega\in W_0$, and an $\vec{a}\in S_\omega$, denote by $f_{\omega, \vec{a}}$ the $I_1$-invariant function in $\text{ind}^{G} _{KZ}\sigma$, supported on the double coset $KZ t_{\vec{a}}I_1$ and having value $v_0$ at the element $\omega t_{\vec{a}}$. Note that $f_{\omega, \vec{a}} (t_{\vec{a}})= \omega^{-1}v_0$.

\begin{proposition}\label{the pro-p iwahori invariants}

The set of functions $\{f_{\omega, \vec{a}}\mid  \omega\in W_0, \vec{a}\in S_\omega\}$ consists of a basis of the $I_1$-invariants of $\textnormal{ind}^{G} _{KZ}\sigma$.

\end{proposition}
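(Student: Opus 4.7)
The plan is to verify, in sequence, that each $f_{\omega,\vec{a}}$ is well-defined, that the family is linearly independent, and that it spans. The backbone is the disjoint double coset decomposition
\begin{equation*}
G=\bigsqcup_{\omega\in W_0}\bigsqcup_{\vec{a}\in S_\omega} KZ\cdot t_{\vec{a}}\cdot I_1,
\end{equation*}
obtained by combining the Iwasawa--Iwahori decomposition \eqref{Iwahori-Hecke decom} with part (2) of Lemma \ref{combin lemma}; in particular, distinct parameters $\vec{a}$ yield disjoint supports in $G$, so linear independence reduces to the nonvanishing of each $f_{\omega,\vec{a}}$, which is clear from the definition.

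For well-definedness, I would argue that an $I_1$-invariant function in $\textnormal{ind}^G_{KZ}\sigma$ supported on the single double coset $KZ\,t_{\vec{a}}\,I_1$ and prescribed to take the value $v_0$ at $\omega t_{\vec{a}}$ exists if and only if $v_0$ is fixed by $\sigma(H_1)$, where $H_1:=(\omega t_{\vec{a}})\,I_1\,(\omega t_{\vec{a}})^{-1}\cap KZ$. A determinant computation (exploiting $\det I_1\subseteq 1+\mathfrak{p}_F$) reduces $H_1$ to $(\omega t_{\vec{a}})\,I_1\,(\omega t_{\vec{a}})^{-1}\cap K$; this is a pro-$p$ subgroup of $K$, and by the defining property of $S_\omega$ it is contained in $(\omega t_{\vec{a}})\,I\,(\omega t_{\vec{a}})^{-1}\cap K\subseteq I$. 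Since $I_1$ is a normal (hence unique) pro-$p$-Sylow of $I$, we conclude $H_1\subseteq I_1$, and $v_0\in\sigma^{I_1}$ is automatically fixed.

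The main content is spanning. Given $f\in(\textnormal{ind}^G_{KZ}\sigma)^{I_1}$, its compact support modulo $KZ$ restricts its nonvanishing to finitely many $\vec{a}\in\mathbb{Z}^{N-1}$; for each such $\vec{a}$, let $\omega\in W_0$ be the unique element with $\vec{a}\in S_\omega$. It then suffices to show $f(\omega t_{\vec{a}})\in\overline{\mathbf{F}}_p\, v_0$, since then $f=\sum_{\vec{a}} c_{\vec{a}}\,f_{\omega,\vec{a}}$ where $c_{\vec{a}}$ is defined by $f(\omega t_{\vec{a}})=c_{\vec{a}}\,v_0$. The key assertion is that the image of $H_1$ in $\mathbb{G}=K/K_1$ equals $\mathbb{U}$, whence $\sigma^{H_1}=\sigma^{\mathbb{U}}=\sigma^{I_1}=\overline{\mathbf{F}}_p\, v_0$ (the one-dimensionality of $\sigma^{I_1}$ being \cite[Theorem 6.12]{C-E2004}). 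Writing $H:=(\omega t_{\vec{a}})\,I\,(\omega t_{\vec{a}})^{-1}\cap K$, part (1) of Lemma \ref{combin lemma} says $H$ surjects onto $\mathbb{B}=I/K_1$. The subgroup $H_1$ is normal in $H$ with $[H:H_1]$ dividing $[I:I_1]=|\mathbb{T}|$; and since $H_1$ is pro-$p$, its image in $\mathbb{B}$ is contained in $\mathbb{U}$. Combined with $[\mathbb{B}:\mathbb{U}]=|\mathbb{T}|$, an index count forces this image to be all of $\mathbb{U}$.

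The principal obstacle is this last index count: one must keep careful track of the simultaneous constraints $H_1\trianglelefteq H$, $[H:H_1]\mid|\mathbb{T}|$, and $H\twoheadrightarrow\mathbb{B}$, in order to conclude that $H_1$ covers $\mathbb{U}$ exactly rather than only a proper subgroup. Once that is in place the remainder is formal, combining Iwasawa--Iwahori combinatorics with the one-dimensionality of $\sigma^{\mathbb{U}}$, and the proposition follows.
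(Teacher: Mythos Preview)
Your proof is correct and follows essentially the same approach as the paper's: both reduce the spanning step to showing that $f(\omega t_{\vec{a}})\in\sigma^{I_1}$ via Lemma~\ref{combin lemma}(1). The paper is more terse---from $I=K_1\cdot H$ one gets immediately $I_1=K_1\cdot(H\cap I_1)=K_1\cdot H_1$ (the equality $H\cap I_1=H_1$ being a diagonal-entry check), which yields $\sigma^{H_1}=\sigma^{I_1}$ without the index count you flag as the principal obstacle.
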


\begin{proof}
Let $f$ be an $I_1$-invariant function in $\textnormal{ind}^{G} _{KZ}\sigma$, supported on the coset $KZ t_{\vec{a}}I_1$, for some $\omega\in W_0$ and $\vec{a}\in S_\omega$. For $g\in K\cap \omega t_{\vec{a}}\cdot I_1 \cdot (\omega t_{\vec{a}})^{-1}$, we have $f(g\cdot\omega t_{\vec{a}})= \sigma(g)f(\omega t_{\vec{a}})= f(\omega t_{\vec{a}})$. As $K_1$ acts trivially on $\sigma$, by $(1)$ of Lemma \ref{combin lemma} we see $f(\omega t_{\vec{a}})$ is indeed $I_1$-invariant, and thus $f$ is proportional to $f_{\omega, \vec{a}}$. By the remarks in subsection \ref{subsec: Iwasawa--Iwahori decomposition} , the proposition follows.
\end{proof}

\section{The $\mathcal{H}(IZ, \chi_\sigma)$-module $(\textnormal{ind}^G _{KZ} \sigma)^{IZ, \chi_\sigma}$}

In this part, we compute the right action of an Iwahori--Hecke algebra $\mathcal{H}(IZ, \chi)$ on the $(IZ, \chi)$-isotypic of a maximal compact induction.

\smallskip

\begin{proposition}\label{the action of Iwahori}
For an $\omega\in W_0$ and $\vec{a}\in S_\omega$, we have:

$(1)$.~~The group $I$ acts on the function $f_{\omega, \vec{a}}$ as the character $\chi^{\omega}_\sigma$.

$(2)$.~~ One has
\begin{center}
$\gamma\cdot f_{\omega, \vec{a}}= f_{\omega\cdot \omega^{-1}_2, \vec{a}^\gamma}$,
\end{center}
where
\begin{center}
$\vec{a}^\gamma := (1+a_2 -a_1, 1+a_3- a_1 \cdots, 1+a_{N-1}-a_1, 1-a_1)$.
\end{center}

\end{proposition}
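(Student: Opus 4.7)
The plan is to reduce to a single point evaluation via Proposition \ref{the pro-p iwahori invariants}: an $I_1$-invariant function supported on $KZ\cdot t_{\vec{b}}\cdot I_1$ is determined up to scalar by its value at any $\omega' t_{\vec{b}}$ with $\omega'\in W_0$. First, $I_1$ is normal in $I$ (since $I/I_1\cong\mathbb{T}$ is abelian), so $i\cdot f_{\omega,\vec{a}}$ remains $I_1$-invariant; its support $KZ\cdot t_{\vec{a}}\cdot I_1\cdot i^{-1}$ stays equal to $KZ\cdot t_{\vec{a}}\cdot I_1$, because $I=T(\mathfrak{o}_F)\cdot I_1$ and $t_{\vec{a}}$ commutes with $T(\mathfrak{o}_F)$. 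Hence $i\cdot f_{\omega,\vec{a}}$ is a scalar multiple of $f_{\omega,\vec{a}}$. To identify the scalar, I write $i=t_0\cdot i_1$ with $t_0\in T(\mathfrak{o}_F)$ lifting the class $iI_1\in \mathbb{T}$ and $i_1\in I_1$, and use that $\omega t_0\omega^{-1}\in T(\mathfrak{o}_F)\subset K$:
\[
(i\cdot f_{\omega,\vec{a}})(\omega t_{\vec{a}}) \;=\; f_{\omega,\vec{a}}\bigl((\omega t_0\omega^{-1})\cdot\omega t_{\vec{a}}\bigr) \;=\; \sigma(\omega t_0\omega^{-1})\,v_0 \;=\; \chi_\sigma^\omega(i)\,v_0.
\]

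\textbf{Part $(2)$.} The heart of the argument is the matrix identity
\[
t_{\vec{a}^\gamma}\cdot\gamma \;=\; \omega_2\cdot\varpi^{1-a_1}\cdot t_{\vec{a}},
\]
obtained by plugging in $\gamma=\omega_2\cdot\text{diag}(\varpi,1,\ldots,1)$, commuting $t_{\vec{a}^\gamma}$ through $\omega_2$ using $\omega_2^{-1}\cdot\text{diag}(d_1,\ldots,d_N)\cdot\omega_2=\text{diag}(d_N,d_1,\ldots,d_{N-1})$, and extracting the central factor $\varpi^{1-a_1}$; the entries of $\vec{a}^\gamma$ are precisely those forced by this extraction. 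Given this identity, the support of $\gamma\cdot f_{\omega,\vec{a}}$ is $KZ\cdot t_{\vec{a}^\gamma}\cdot I_1$ (using that $\gamma$ normalizes $I_1$), and
\[
(\gamma\cdot f_{\omega,\vec{a}})(\omega\omega_2^{-1}\cdot t_{\vec{a}^\gamma}) \;=\; f_{\omega,\vec{a}}(\varpi^{1-a_1}\cdot\omega t_{\vec{a}}) \;=\; v_0,
\]
using $\sigma(\varpi)=1$.

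\textbf{Well-definedness and main obstacle.} To conclude $\gamma\cdot f_{\omega,\vec{a}}=f_{\omega\omega_2^{-1},\vec{a}^\gamma}$, I still need $\vec{a}^\gamma\in S_{\omega\omega_2^{-1}}$ so that the target basis element is defined. The structural observation here is that $\gamma$ normalizes $I$ itself (not just $I_1$): since $I=T(\mathfrak{o}_F)\cdot I_1$, $\gamma$ normalizes $I_1$, and a direct check shows $\gamma T(\mathfrak{o}_F)\gamma^{-1}=T(\mathfrak{o}_F)$. Combined with the identity above,
\[
\omega\omega_2^{-1}t_{\vec{a}^\gamma}\cdot I\cdot (\omega\omega_2^{-1}t_{\vec{a}^\gamma})^{-1} \;=\; \omega t_{\vec{a}}\cdot(\gamma^{-1}I\gamma)\cdot t_{\vec{a}}^{-1}\omega^{-1} \;=\; \omega t_{\vec{a}}\cdot I\cdot t_{\vec{a}}^{-1}\omega^{-1},
\]
so $\vec{a}^\gamma\in S_{\omega\omega_2^{-1}}\Leftrightarrow \vec{a}\in S_\omega$. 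The main obstacle is nothing deep but simply the diagonal-permutation bookkeeping that pins down the precise form of $\vec{a}^\gamma$; once that identity is in hand, everything else is a direct application of Proposition \ref{the pro-p iwahori invariants} together with the normalization properties of $\gamma$.
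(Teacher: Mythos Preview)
Your proof is correct and follows essentially the same route as the paper: for (1) a direct evaluation using the $KZ$-equivariance and $I_1$-invariance of $f_{\omega,\vec{a}}$, and for (2) the matrix identity $t_{\vec{a}^\gamma}\gamma=\omega_2\,\varpi^{1-a_1}\,t_{\vec{a}}$ (the paper derives this by computing $t_{\vec{b}}\gamma$ for general $\vec{b}$ and solving, you state it directly and verify), followed by a one-point evaluation at $\omega\omega_2^{-1}t_{\vec{a}^\gamma}$. Your treatment is in fact more careful than the paper's in one respect: you explicitly check $\vec{a}^\gamma\in S_{\omega\omega_2^{-1}}$ via the observation that $\gamma$ normalizes $I$, a point the paper leaves implicit.
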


\begin{proof}
For $(1)$, it is a simple computation by definition.

For $(2)$, one displays as follows:
\begin{align*}
t_{\vec{b}}\cdot\gamma &= \text{diag}(\varpi^{b_1}, \cdots, \varpi^{b_{N-1}}, 1)\omega_2 \text{diag}(\varpi, I_{N-1})\\
&= \omega_2 \text{diag}(1, \varpi^{b_1}, \cdots, \varpi^{b_{N-1}})\text{diag}(\varpi,  I_{N-1})\\
&=\omega_2 \varpi^{b_{N-1}}\text{diag}(\varpi^{-b_{N-1}}, \varpi^{b_1 -b_{N-1}}, \cdots, \varpi^{b_{N-2} -b_{N-1}}, 1)\text{diag}(\varpi, I_{N-1})\\
&=\omega_2 \varpi^{b_{N-1}}\text{diag} (\varpi^{1-b_{N-1}}, \varpi^{b_1 -b_{N-1}}, \cdots, \varpi^{b_{N-2} -b_{N-1}}, 1).
\end{align*}
As $f_{\omega, \vec{a}}$ is supported on $KZ t_{\vec{a}}I_1$, one concludes that the function $\gamma f_{\omega, \vec{a}}$ will be supported on a single coset $KZ t_{\vec{b}}I_1$, where $\vec{b}$ is that given in the statement. By definition, we check that $\gamma f_{\omega, \vec{a}} (\omega' t_{\vec{b}})= v_0$, where $\omega'= \omega \omega^{-1}_2$. Hence the claim.
\end{proof}

By $(1)$ of last Proposition, we have
\begin{corollary}\label{basis of isotypic}
For an $\omega\in W_0$, a basis of the $(IZ, \chi^\omega _\sigma)$-isotypic of $\textnormal{ind}^G _{KZ} \sigma$ is given by:

\begin{center}
$\{f_{\nu, \vec{a}} \mid \nu\in \omega\cdot W(\chi_\sigma), \vec{a}\in S_{\nu}\}.$
\end{center}

\end{corollary}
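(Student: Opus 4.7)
The plan is to obtain this corollary as a straightforward consequence of Proposition \ref{the pro-p iwahori invariants} combined with part (1) of Proposition \ref{the action of Iwahori}, by extracting the $\chi^\omega_\sigma$-eigenspace from the already-known basis of the pro-$p$-Iwahori invariants.

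First I would note that the $(IZ, \chi^\omega_\sigma)$-isotypic component sits inside $(\textnormal{ind}^G_{KZ}\sigma)^{I_1}$: indeed $\chi^\omega_\sigma$ is trivial on $I_1$ (since $\chi_\sigma$ factors through $I/I_1 \cong \mathbb{T}$ and $W_0$ permutes such characters), and $\varpi$ acts trivially on $\textnormal{ind}^G_{KZ}\sigma$ by construction, so the $Z$-part imposes no further condition. Hence I may work entirely within the basis $\{f_{\nu,\vec{a}} \mid \nu \in W_0,\ \vec{a}\in S_\nu\}$ produced by Proposition \ref{the pro-p iwahori invariants}.

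Next, by part (1) of Proposition \ref{the action of Iwahori}, each basis vector $f_{\nu,\vec{a}}$ is an eigenvector for $I$ with eigencharacter $\chi^\nu_\sigma$. Since distinct characters of the finite group $I/I_1$ have linearly independent isotypic components, the $I_1$-invariants decompose as the direct sum, over the orbit of $\chi_\sigma$ under $W_0$, of the corresponding eigenspaces; the $\chi^\omega_\sigma$-eigenspace is therefore spanned by precisely those $f_{\nu,\vec{a}}$ for which $\chi^\nu_\sigma = \chi^\omega_\sigma$.

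Finally I would translate the condition $\chi^\nu_\sigma = \chi^\omega_\sigma$ into a coset condition using the definition $W(\chi_\sigma) = \{\omega' \in W_0 \mid \chi_\sigma = \chi^{\omega'}_\sigma\}$: since $W_0$ acts on characters, $\chi^\nu_\sigma = \chi^\omega_\sigma$ holds iff $\omega^{-1}\nu \in W(\chi_\sigma)$, i.e.\ iff $\nu \in \omega\cdot W(\chi_\sigma)$. Combined with the fact that for such $\nu$ the set $\{f_{\nu,\vec{a}} \mid \vec{a}\in S_\nu\}$ is part of the basis from Proposition \ref{the pro-p iwahori invariants}, this yields the claimed basis. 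There is no real obstacle here; the statement is essentially a bookkeeping consequence of the two earlier results, and the only point worth spelling out carefully is the identification of the eigencondition with the coset $\omega\cdot W(\chi_\sigma)$.
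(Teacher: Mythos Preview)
Your proposal is correct and matches the paper's approach exactly: the paper itself gives no proof beyond the single line ``By $(1)$ of last Proposition, we have,'' and your write-up simply spells out the details of that deduction (basis of $I_1$-invariants from Proposition~\ref{the pro-p iwahori invariants}, eigencharacter identification from Proposition~\ref{the action of Iwahori}(1), and the coset translation of $\chi^\nu_\sigma=\chi^\omega_\sigma$).
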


\subsection{The Iwahori--Hecke case}

In this case, the group $W(\chi_\sigma)= W_0$. By Corollary \ref{basis of I-W}, we know $(\textnormal{ind}^G _{KZ} \sigma)^{IZ, \chi_\sigma}$ is the whole space $(\textnormal{ind}^G _{KZ} \sigma)^{I_1}$, and the latter has a basis $\{f_{\omega, \vec{a}} \mid \omega \in W_0, \vec{a} \in S_\omega\}$ (Proposition \ref{the pro-p iwahori invariants}).

\medskip

The following proposition generalizes \cite[Lemma 14, 15]{B-L95} and \cite[Proposition 17]{B-L94} to $GL_N (N\geq 3)$.
\begin{proposition}\label{action of IH: trivial case}
Assume that $W(\chi_\sigma)= W_0$.

$(1)$.~We have $f_{\omega, \vec{a}}\mid T_\gamma= f_{\omega\omega_2, \gamma\cdot\vec{a}}$, where
\begin{center}

$\gamma\cdot\vec{a} :=(1-a_{N-1}, a_1 -a_{N-1}, \cdots, a_{N-2}-a_{N-1})$.

\end{center}

$(2)$.~We have $f_{\omega, \vec{a}}\mid T_{\omega_1}= \begin{cases} f_{\omega \omega_1, \omega_1 \cdot\vec{a}}, ~~~\text{if}~a_1 >a_2; \\

c_{\sigma, \omega} f_{\omega, \vec{a}}, ~~~~\text{if}~N=3,~a_1= a_2; \\

-f_{\omega, \vec{a}}, ~~~~~~~~\text{if}~N=3,~a_1< a_2. \\
\end{cases}$\\
Here, $\omega_1 \cdot\vec{a} := (a_2, a_1, ..., a_{N-1})$, and $c_{\sigma, \omega}$ is a constant related to $\sigma$.

\end{proposition}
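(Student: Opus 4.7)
The plan is to apply the standard convolution formula for the right action of $\mathcal{H}(IZ, \chi_\sigma)$ on $V^{IZ, \chi_\sigma}$ (with $V := \textnormal{ind}^G_{KZ}\sigma$) to the basis vectors $f_{\omega, \vec{a}}$ of Proposition~\ref{the pro-p iwahori invariants}, and evaluate the resulting finite sums on convenient matrix representatives. Via the isomorphism $V^{IZ, \chi_\sigma} \cong \textnormal{Hom}_G(\textnormal{ind}^G_{IZ}\chi_\sigma,\, V)$, the operator $T_g$ acts as
\[
(f \,|\, T_g)(z) \;=\; \sum_{u}\, \chi_\sigma(u)^{-1}\, f(z\, u\, g),
\]
where $u$ runs over representatives of $I/(I \cap g I g^{-1})$. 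Both statements reduce to computing such a sum at points $z = \nu\, t_{\vec{b}}$ and matching the outcome to a basis function.

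For $(1)$, the element $\gamma$ normalizes $I_1$ and $IZ$, so $I \cap \gamma I \gamma^{-1} = I$ and the sum collapses to $(f \,|\, T_\gamma)(z) = f(z\gamma)$; that is, $T_\gamma$ acts by right translation by $\gamma$ on $V^{IZ, \chi_\sigma}$. The matrix computation of Proposition~\ref{the action of Iwahori}$(2)$, carried out there for left translation by $\gamma$, then needs to be repeated on the right: writing $t_{\vec{b}}\, \gamma = \nu'\, t_{\vec{a}}\, i_1$ for appropriate $\nu' \in W_0$ and $i_1 \in I_1$ reads off both the support shift $\vec{b} = \gamma \cdot \vec{a}$ (formally inverse to the shift $\vec{a} \mapsto \vec{a}^\gamma$ there) and the $W_0$-label $\omega \omega_2$ (rather than $\omega \omega_2^{-1}$, the flip reflecting the swap from left to right).

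For $(2)$, the decomposition $I \omega_1 I = \bigsqcup_{x \in k_F}\, I\, u_{12}(x)\, \omega_1$ with $u_{12}(x) = I + x E_{12}$ together with the triviality of $\chi_\sigma$ on unipotents gives
\[
(f_{\omega, \vec{a}} \,|\, T_{\omega_1})(z) \;=\; \sum_{x \in k_F}\, f_{\omega, \vec{a}}\bigl(z\, u_{12}(x)\, \omega_1\bigr).
\]
The identities $u_{12}(x)\, \omega_1 = \omega_1\, u_{21}(x)$ and $t_{\omega_1 \cdot \vec{a}}\, u_{21}(x) = u_{21}(x\, \varpi^{a_1 - a_2})\, t_{\omega_1 \cdot \vec{a}}$ show that $z\, u_{12}(x)\, \omega_1$ falls into $KZ\, t_{\omega_1 \cdot \vec{a}}\, I_1$ or $KZ\, t_{\vec{a}}\, I_1$ depending on the sign of $a_1 - a_2$. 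When $a_1 > a_2$, the unipotent $u_{21}(x\, \varpi^{a_1 - a_2})$ lies in $I_1$ for every $x$, putting the output onto the new coset $KZ\, t_{\omega_1 \cdot \vec{a}}\, I_1$ and yielding $f_{\omega\omega_1,\, \omega_1 \cdot \vec{a}}$. When $a_1 < a_2$, the unipotent leaves $K$ for $x \ne 0$; after a Bruhat-type decomposition absorbs those contributions, the surviving sum remains on $KZ\, t_{\vec{a}}\, I_1$ and produces a scalar multiple of $f_{\omega, \vec{a}}$, the coefficient $-1$ being forced by the quadratic relation $T_{\omega_1}^2 = -T_{\omega_1}$ of Proposition~\ref{structure of Iwahori-Hecke: degenerate case}. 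In the borderline case $N = 3$, $a_1 = a_2$, the shift $\omega_1 \cdot \vec{a} = \vec{a}$ is trivial and the output stays on $KZ\, t_{\vec{a}}\, I_1$, but the value at $\omega\, t_{\vec{a}}$ now becomes a non-scalar combination of $\sigma(\mathbb{G})$-translates of $v_0$, producing the scalar $c_{\sigma, \omega}$ depending on the weight.

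The main obstacle will be the borderline case $a_1 = a_2$: identifying $c_{\sigma, \omega}$ explicitly requires tracking the action of the relevant $GL_2$-subgroup of $\mathbb{G}$ on the line $\sigma^{I_1}$, in the spirit of \cite[Proposition 17]{B-L94}. The quadratic relation $T_{\omega_1}^2 = -T_{\omega_1}$ confines $c_{\sigma, \omega}$ to $\{0, -1\}$, which should be enough qualitative control for the later arguments even if a closed-form value is not required.
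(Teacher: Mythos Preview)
Your plan is essentially the paper's own argument: the explicit coset formula for $T_g$, collapse to a single term for $T_\gamma$, case split on the sign of $a_1-a_2$ for $T_{\omega_1}$, and the quadratic relation to handle the degenerate cases. Two points to tighten.

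First, your convolution formula carries $g$ where the paper's carries $g^{-1}$ (it uses $f\mid T_g=\sum_{i}ig^{-1}\cdot f$). For $\omega_1=\omega_1^{-1}$ this is harmless, but for $\gamma$ it matters: solving $t_{\vec b}\,\gamma\in KZ\,t_{\vec a}\,I_1$ as you propose yields $\vec b=\vec a^{\,\gamma}$, not $\gamma\cdot\vec a$, contrary to what you assert. Check the convention before reading off the label $(\omega\omega_2,\gamma\cdot\vec a)$.

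Second, in the case $a_1<a_2$ your sketch says the output ``remains on $KZ\,t_{\vec a}\,I_1$'' with scalar $-1$ forced by $T_{\omega_1}^2=-T_{\omega_1}$. Neither step quite works as stated. The $x=0$ term genuinely lands on the other coset $KZ\,t_{\omega_1\cdot\vec a}\,I_1$, so a priori the output is $c_1 f_{\omega,\vec a}+c_2 f_{\omega\omega_1,\omega_1\cdot\vec a}$; and the quadratic relation applied directly to $f_{\omega,\vec a}$ only constrains $c_1\in\{0,-1\}$. The paper's fix is to feed the quadratic relation through Case~1 instead: since $\omega_1\cdot\vec a$ has first coordinate $a_2>a_1$, Case~1 gives $f_{\omega\omega_1,\omega_1\cdot\vec a}\mid T_{\omega_1}=f_{\omega,\vec a}$, and then
\[
f_{\omega,\vec a}\mid T_{\omega_1}
=\bigl(f_{\omega\omega_1,\omega_1\cdot\vec a}\mid T_{\omega_1}\bigr)\mid T_{\omega_1}
=-\,f_{\omega\omega_1,\omega_1\cdot\vec a}\mid T_{\omega_1}
=-\,f_{\omega,\vec a},
\]
which simultaneously kills $c_2$ and pins down $c_1=-1$.
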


\begin{proof}

For a $g\in G$, the right action of $T_g$ on the space $(\text{ind}^G _{KZ} \sigma)^{I_1}$ is given by:

\begin{equation}\label{formula of T_g}
f\mid T_g= \sum_{i\in I_1 / (I_1\cap g^{-1} I_1 g)} ig^{-1}\cdot f,
\end{equation}
for an $f\in (\text{ind}^G _{KZ} \sigma)^{I_1}$. In this argument, we will take $g$ as $\gamma$ and $\omega_1$.

We treat $(1)$ at first. As $\gamma$ normalizes $I_1$, the sum above in this case only contains a single term, and $(1)$ follows by an argument similar to that of $(2)$ of Proposition \ref{the action of Iwahori}.

Now we deal with $(2)$. The above formula \eqref{formula of T_g} specifies to:
\begin{center}
$f_{\omega, \vec{a}} \mid T_{\omega_1} =\sum_{i\in I_1 / I_1 \cap \omega_1 I_1 \omega_1} i\omega_1 \cdot f_{\omega, \vec{a}}$
\end{center}
Note first the identification:
\begin{equation}\label{representatives for I/omega I omega}
I_1 / I_1 \cap \omega_1 I_1 \omega_1 \cong \begin{pmatrix} 1  & k_F & 0 \\ 0  & 1 & 0\\
0 & 0 & 1_{N-2}
\end{pmatrix}.
\end{equation}

Case $1)$.~  $a_1 > a_2$. Formally, we have a coset decomposition
\begin{center}
$KZ t_{\vec{a}} I_1 = \cup_{i \in (t^{-1}_{\vec{a}} K t_{\vec{a}} \cap I_1)\backslash I_1} KZ t_{\vec{a}} i$
\end{center}
In the case $a_1 > a_2$, a set of representatives for $(t^{-1}_{\vec{a}} K t_{\vec{a}} \cap I_1)\backslash I_1$ can be taken so that $i$ has zero $(1,2)$-entry and its $(2,1)$-entry goes through $\mathfrak{p}_F /\mathfrak{p}_F ^{a_1 -a_2}$. Hence for all such $i$, we have $\omega_1 i \omega_1 \in I_1$.

By the proceeding remark, we see immediately the function $f_{\omega, \vec{a}} \mid T_{\omega_1}$ is supported on $KZ t _{\omega_1\cdot \vec{a}} I_1$. To compute
\begin{center}
$\sum_{i \in I_1 /I_1 \cap \omega_1 I_1 \omega_1} f_{\omega, \vec{a}} (t _{\omega_1\cdot \vec{a}}\cdot i \omega_1),$
\end{center}
we separate the sum into two parts:
\begin{center}
$\sum_{i \in (I_1 \setminus I_1\cap \omega_1 I_1 \omega_1) /I_1 \cap \omega_1 I_1 \omega_1} f_{\omega, \vec{a}} (t _{\omega_1\cdot \vec{a}}\cdot i \omega_1) + \omega_1 f_{\omega, \vec{a}} (t_{\vec{a}})$
\end{center}
The first sum vanishes: we claim that $t _{\omega_1\cdot \vec{a}}\cdot i \omega_1 \notin KZ t _{\vec{a}} I_1$ for any $i \in I_1 \setminus I_1 \cap \omega_1 I_1 \omega_1$.

It remains to verify the claim. Recall the following identity:
\begin{center}
$\begin{pmatrix} 0  & 1  \\ 1  & 0 \end{pmatrix} \begin{pmatrix} 1  & t \\ 0  & 1 \end{pmatrix}= \begin{pmatrix} 1  & t^{-1} \\ 0  & 1 \end{pmatrix}\begin{pmatrix} -t^{-1}  & 0  \\ 0  & t \end{pmatrix}\begin{pmatrix} 1  & 0  \\ t^{-1}  & 1 \end{pmatrix}$
\end{center}
for $t \neq 0$. Using the identification \eqref{representatives for I/omega I omega} and the above identity, we can indeed prove
\begin{center}
$t _{\omega_1\cdot \vec{a}}\cdot i \omega_1 \in KZ t _{\omega_1\cdot \vec{a}} I_1$
\end{center}
for all such $i$. Note that it is this place the condition $a_1 > a_2$ plays a role. As $t_{\vec{a}} \neq t _{\omega_1\cdot \vec{a}}$, the claim follows.

Now the last term gives us $\omega_1 \omega^{-1} v_0$. As we already know $\omega\cdot \vec{a} \in S_{\omega\omega_1}$ (Lemma \ref{combin lemma}), we conclude that $f_{\omega, \vec{a}} \mid T_{\omega_1} = f_{\omega\omega_1, \omega_1\cdot \vec{a}}$.

\medskip
Case $2)$.~  $a_1 < a_2$.  When $N=3$, the situation $a_1 < a_2$ happens only for $\omega\in \{ Id, \omega_2 \omega_1, \omega^2 _2 \}$. Explicitly, a set of representatives for $(t^{-1}_{\vec{a}} K t_{\vec{a}} \cap I_1)\backslash I_1$ can be taken as follows:
\begin{center}
for $\omega= Id, a_1 < a_2 \leq 0$, $\begin{pmatrix} 1  & \mathfrak{o}_F /\mathfrak{p}_F ^{a_2 -a_1} & \mathfrak{o}_F /\mathfrak{p}_F ^{-a_1} \\ 0  & 1 & \mathfrak{o}_F /\mathfrak{p}_F ^{-a_2}\\
0 & 0 & 1
\end{pmatrix}$;
\end{center}

\begin{center}
for $\omega= \omega_2 \omega_1, a_1\leq 0, a_2 \geq 1$, $\begin{pmatrix} 1  & \mathfrak{o}_F /\mathfrak{p}_F ^{a_2 -a_1} & \mathfrak{o}_F /\mathfrak{p}_F ^{-a_1} \\ 0  & 1 & 0\\
0 & \mathfrak{p}_F /\mathfrak{p}_F ^{a_2} & 1
\end{pmatrix}$;
\end{center}
\begin{center}
for $\omega^2 _2, 1\leq a_1 < a_2$, $\begin{pmatrix} 1  & \mathfrak{o}_F /\mathfrak{p}_F ^{a_2 -a_1} & 0 \\ 0  & 1 & 0\\
\mathfrak{p}_F /\mathfrak{p}_F ^{a_1} & \mathfrak{p}_F /\mathfrak{p}_F ^{a_2} & 1
\end{pmatrix}$.
\end{center}

In every case, for an $i$ as above, we denote its $(12)$-entry by $t$. We can check that
\begin{center}
$KZ t_{\vec{a}} i \omega_1 I_1 =KZ t_{\vec{a}} u(t) \omega_1 I_1$,
\end{center}
where $u(t)$ is the matrix
\begin{center}
$\begin{pmatrix} 1  & t  & 0 \\ 0  & 1 & 0\\
0 & 0 & 1
\end{pmatrix}$.
\end{center}
Here, we note that $i$ can be written as $u(t)\cdot i'$ for some $i' \in I_1$ such that $\omega_1 i' \omega_1 \in I_1$.

Now for $t\in \mathfrak{p}_F$, we see the double coset is $KZ t_{\omega_1 \cdot \vec{a}} I_1$. For $t\in \mathfrak{o}_F \setminus \mathfrak{p}_F$, to verify the claim in Case $1)$ we have indeed shown
\begin{center}
$t_{\vec{a}} u(t)\omega_1 \in KZ t_{\vec{a}} I_1$.
\end{center}

In all, we see the function $f_{\omega, \vec{a}} \mid T_{\omega_1}$ is supported on $KZ t _{\omega_1\cdot \vec{a}} I_1 \cup KZ t_{\vec{a}} I_1$. Thus, it is a linear combination of $f_{\omega, \vec{a}}$ and $f_{\omega \omega_1, \omega_1 \cdot \vec{a}}$ (Proposition \ref{the pro-p iwahori invariants}). The statement in this case then follows by applying the quadratic relation $T^2 _{\omega_1}= -T_{\omega_1}$ and case $1)$.

\medskip

Case $3)$.~ $N=3, a_1 =a_2$. In this case, we can first check that
\begin{center}
$KZ t_{\vec{a}} I_1 \omega_1 I_1 = KZ t_{\vec{a}} I_1$,
\end{center}
where, a set of representatives for $(t^{-1}_{\vec{a}} K t_{\vec{a}} \cap I_1)\backslash I_1$ can be chosen such that $\omega_1 i \omega_1 \in I_1$. The function $f_{\omega, \vec{a}} \mid T_{\omega_1}$ is thus proportional to $f_{\omega, \vec{a}}$, say $c \cdot f_{\omega, \vec{a}}$ for some $c$. By the quadratic relation of $T_{\omega_1}$, $c$ can only be $0$ or $-1$. However, the exact value of $c$ depends on the weight $\sigma$ and $\omega$, and we denote $c$ by $c_{\sigma, \omega}$. Note in the current case $\omega \in \{ Id, \omega^2 _2\}$. We record the equation which determines $c_{\sigma, \omega}$:
\begin{center}
$\sum_{t\in k_F}~ \begin{pmatrix} 1  & t & 0 \\ 0  & 1 & 0\\
0 & 0 & 1
\end{pmatrix} \omega_1 \omega^{-1} v_0 = c_{\sigma, \omega} \omega^{-1} v_0.$
\end{center}

The argument for the proposition is done.
\end{proof}

\begin{remark}
The condition $W(\chi_\sigma)= W_0$ plays only the role that the two operators in consideration are well-defined, and it is not used in the argument.
\end{remark}

\emph{From now on, without causing confusion we will sometimes write a function of the form $c\cdot f_{\omega, \vec{a}}$ as $c\cdot \vec{a}$.}

\begin{proposition}\label{proper in the Iwahori--Hecke case}
Assume $N=3$ and $W(\chi_\sigma)=W_0$. We have:

$(1)$.~For $(a_1, a_2)\in S_{Id}$ satisfying $ a_1 \leq a_2 \leq 0$, we have

\begin{center}
$(a_1 -1, a_2)= (a_1, a_2)\mid (T_\gamma T_{\omega_1})^2 = (a_1, a_2)\mid T_{t_{(-1, 0)}}$;

$(a_1 -1, a_2-1)= (a_1, a_2)\mid (T _\gamma T_{\omega_1} T_\gamma)^2 =(a_1, a_2)\mid T_{t_{(-1, -1)}}$.
\end{center}

$(2)$.~For $(a_1, a_2) \in S_{\omega_1}$ satisfying $a_2 +1 \leq a_1 \leq 0$, we have:
\begin{center}
$(a_1, a_2 -1)= (a_1, a_2)\mid (T_{\omega_1}T_\gamma)^2 =(a_1, a_2)\mid T_{t_{(0, -1)}}$;

$(a_1-1, a_2 -1)= (a_1, a_2)\mid (T_\gamma T_{\omega_1}T_\gamma)^2=(a_1, a_2)\mid T_{t_{(-1, -1)}}$.
\end{center}

$(3)$.~For $(a_1, a_2) \in S_{\omega_1 \omega_2}$ satisfying $a_1 \geq a_2 +1 \geq 2$, we have
\begin{center}
$(a_1 +1, a_2)= (a_1, a_2) \mid (T_{\omega_1}T^2 _\gamma)^2=(a_1, a_2)\mid T_{t_{(1, 0)}}$;

$(a_1 +1, a_2 +1)= (a_1, a_2) \mid (T^2 _\gamma T_{\omega_1} T^2 _\gamma)^2=(a_1, a_2)\mid T_{t_{(1, 1)}}$.
\end{center}

$(4)$.~For $(a_1, a_2) \in S_{\omega_2 \omega_1}$ satisfying $a_1 \leq 0, a_2 \geq 1$, we have
\begin{center}
$(a_1 -1, a_2)= (a_1, a_2) \mid (T_\gamma T_{\omega_1})^2 =(a_1, a_2)\mid T_{t_{(-1, 0)}}$;

$(a_1, a_2 +1)= (a_1, a_2) \mid (T^2 _\gamma T_{\omega_1})^2 =(a_1, a_2)\mid T_{t_{(0, 1)}}$.
\end{center}

$(5)$.~For $(a_1, a_2) \in S_{\omega_2}$ satisfying $a_1 \geq 1, a_2 \leq 0$, we have:
\begin{center}
$(a_1 +1, a_2)= (a_1, a_2) \mid (T_{\omega_1}T^2 _\gamma)^2 =(a_1, a_2)\mid T_{t_{(1, 0)}}$;

$(a_1, a_2 -1) = (a_1, a_2)\mid (T_{\omega_1}T_\gamma)^2 =(a_1, a_2)\mid T_{t_{(0, -1)}}$.
\end{center}

$(6)$.~For $(a_1, a_2) \in S_{\omega^2 _2}$ satisfying $a_2 \geq a_1 \geq 1$, we have:
\begin{center}
$(a_1, a_2 +1) = (a_1, a_2) \mid (T^2 _\gamma T_{\omega_1})^2 =(a_1, a_2)\mid T_{t_{(0, 1)}}$;

$(a_1 +1, a_2 +1) = (a_1, a_2) \mid (T^2 _\gamma T_{\omega_1} T^2 _\gamma)^2 =(a_1, a_2)\mid T_{t_{(1, 1)}}$.

\end{center}

\end{proposition}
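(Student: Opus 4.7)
The plan is to verify each of the twelve identities in (1)--(6) by applying both sides to the relevant basis function $f_{\omega,(a_1,a_2)}$ and showing they agree. For the ``product-of-generators'' side, I iteratively apply $T_\gamma$ and $T_{\omega_1}$, using Proposition \ref{the action of Iwahori}(2) for the $\gamma$-step (which, because $\gamma$ normalizes $I_1$, sends one basis function to another without any sum collapsing) and Proposition \ref{action of IH: trivial case}(2) for the $T_{\omega_1}$-step, where one picks out the correct branch among the three listed. The precise region hypotheses---such as $a_2+1\le a_1\le 0$ in (2) or $a_2\ge a_1\ge 1$ in (6)---are tailored so that every intermediate pair $(a_1',a_2')$ appearing along the orbit lies in the generic branch $a_1'\ne a_2'$, so that the scalar $c_{\sigma,\omega}$ never enters and the output remains a single basis function $f_{\omega',\vec b}$ rather than a linear combination.

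As an illustration, the first identity in case (1) is obtained by running the sequence $T_\gamma,T_{\omega_1},T_\gamma,T_{\omega_1}$ on $f_{Id,(a_1,a_2)}$; the successive states $(\omega';\vec a')$ are
\[
(Id;a_1,a_2)\to(\omega_2;1-a_2,a_1-a_2)\to(\omega_2\omega_1;a_1-a_2,1-a_2)\to(\omega_1;a_2,a_1-1)\to(Id;a_1-1,a_2),
\]
where the strict inequalities required at each $T_{\omega_1}$-arrow are all forced by $a_1\le a_2\le 0$, and at each step the result stays in the appropriate $S_{\omega'}$ of Lemma \ref{combin lemma}. The other eleven identities are verified by the same template; the longer products $(T_\gamma T_{\omega_1}T_\gamma)^2$ and $(T_\gamma^{\,2} T_{\omega_1}T_\gamma^{\,2})^2$ simply extend the walk to produce the diagonal translations $t_{(\pm 1,\pm 1)}$.

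For the identification with the translation operator $T_{t_{\vec b}}$, the cleanest route is a direct computation via the formula \eqref{formula of T_g} with $g=t_{\vec b}$: the region hypotheses on $(a_1,a_2)$ are precisely what makes all but one coset representative $i\in I_1/(I_1\cap t_{\vec b}^{-1}I_1 t_{\vec b})$ send $t_{\vec b}^{-1}\cdot\mathrm{supp}(f_{\omega,(a_1,a_2)})$ outside $KZ$, so the defining sum collapses to a single non-zero contribution giving exactly $f_{\omega,(a_1,a_2)+\vec b}$. Alternatively, once the first equalities are established on enough basis vectors to cover $S_\omega$, the basis of Corollary \ref{basis of I-W} forces the algebra identities $(T_\gamma T_{\omega_1})^2=T_{t_{(-1,0)}}$ and their analogues inside $\mathcal{H}(IZ,\chi_\sigma)$.

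The main obstacle is the combinatorial bookkeeping: twelve identities spread across six Weyl elements, requiring one to track simultaneously the evolving element of $W_0$ under right multiplication by $\omega_2$ (for $T_\gamma$) or $\omega_1$ (for $T_{\omega_1}$), and to verify at every step that the new coordinate vector both remains in its $S_{\omega'}$ and avoids the degenerate equality of the first two entries. Once one case is carried out in full, the remaining cases are mechanical applications of the same recipe.
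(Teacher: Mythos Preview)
Your approach to the first equality in each item coincides with the paper's: track the orbit of $f_{\omega,\vec a}$ under successive applications of $T_\gamma$ and $T_{\omega_1}$ via the explicit formulas. One citation slip: the action of $T_\gamma$ is given by Proposition~\ref{action of IH: trivial case}(1), not Proposition~\ref{the action of Iwahori}(2) (the latter records the \emph{left} $\gamma$-action); your illustration in fact uses the correct formula. Also, the relevant branch at every $T_{\omega_1}$-step is specifically $a_1'>a_2'$ (Case~1 in the proof of Proposition~\ref{action of IH: trivial case}(2)), not merely $a_1'\neq a_2'$; the case $a_1'<a_2'$ returns $-f_{\omega,\vec a}$ rather than a new basis vector and would break the walk, though it never actually arises along these orbits.

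For the second equality the paper takes a different and shorter route: instead of computing $f_{\omega,\vec a}\mid T_{t_{\vec b}}$ directly via \eqref{formula of T_g}, it observes through the braid relations in the extended affine Weyl group that the displayed product of generators equals $T_{t_{\vec b}}$ \emph{as an element of} $\mathcal{H}(IZ,\chi_\sigma)$ (e.g.\ $(T_\gamma T_{\omega_1})^2=T_{t_{(-1,0)}}$), so all twelve second equalities follow from a handful of algebra identities. Your direct-computation route~(a) is valid and is essentially the method carried out later in Proposition~\ref{proper in the regular case}, but it multiplies the work. Your alternative route~(b), however, has a gap: agreement of two Hecke operators on a basis of the module $(\text{ind}^G_{KZ}\sigma)^{IZ,\chi_\sigma}$ does not force their equality in $\mathcal{H}(IZ,\chi_\sigma)$ unless the module is shown to be faithful, and Corollary~\ref{basis of I-W} (which describes a basis of the algebra, not a faithfulness statement) does not supply this.
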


\begin{proof}
In every item listed above, the assertion for the first equality follows from $(1)$ and Case $1)$ of Proposition \ref{action of IH: trivial case}. In the process, we see the operator in the statement is unique (in terms of $T_\gamma$ and $T_{\omega_1}$). On the other hand, one can check that it equals a $T_g$ for some $g\in G$. This can be seen by considering the braid relations on the extended Weyl group. Indeed, in every case, the element $g$ is a diagonal matrix of the form $\varpi t_{\vec{a}}$ for some $\vec{a} \in \mathbb{Z}^2$, so by our definition the operator is nothing but $T_{t_{\vec{a}}}$. This gives the second equality.
\end{proof}

\subsection{The semi-regular case for $N=3$}

Suppose $\chi_\sigma$ is semi-regular, i.e., the group $W(\chi_\sigma)$ is of order $2$. We assume $\chi_\sigma$ is of the form $1\otimes 1\otimes \eta$ for some non-trivial character $\eta$, whence $W(\chi_\sigma)= \{Id, \omega_1\}$. In this case, Corollary \ref{basis of isotypic} says that $(\text{ind}^G _{KZ} \sigma)^{IZ, \chi_\sigma}$ has a basis $\mathcal{V}_{Id}\cup \mathcal{V}_{\omega_1}$, where $\mathcal{V}_{Id}= \{f_{Id, \vec{a}}\mid \vec{a}\in S_{Id}\}$ and $\mathcal{V}_{\omega_1}= \{f_{\omega_1 , \vec{a}}\mid \vec{a}\in S_{\omega_1}\}$. We will abuse the same notation to denote the subspace spanned by them.

\begin{proposition}\label{action of IH: semi-regular case}
For $\vec{a} \in S_{Id} \cup S_{\omega_1}$, we have the following:

$(1)$.~$\vec{a}\mid T_{\omega_1}= \begin{cases} \omega_1 \cdot\vec{a}, ~~~~~~\text{if}~a_1 >a_2; \\

c \cdot \vec{a}, ~~~~~~~~\text{if}~a_1= a_2; \\

-\vec{a}, ~~~~~~~~~~\text{if}~a_1< a_2. \\
\end{cases}$\\
Here, $\omega_1 \cdot\vec{a}= (a_2, a_1)$, and $c$ is a constant related to $\sigma$.

$(2)$.~For $(a_1, a_2) \in S_{Id} \cup S_{\omega_1}$, $(a_1, a_2) \mid T_{\omega_1\cdot t_{(0, -1)}}= (a_2, a_1 -1)$.

$(3)$.~ $T_{\omega_1 t_{(1, 0)}}$ vanishes on the whole space $(\textnormal{ind}^G _{KZ} \sigma)^{IZ, \chi_\sigma}$.

\end{proposition}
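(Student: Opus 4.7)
The plan is to adapt the argument of Proposition \ref{action of IH: trivial case}: apply the right-convolution formula \eqref{formula of T_g} to the relevant Hecke operators and analyze the resulting coset sums. Part $(1)$ in fact requires no new work. As noted in the Remark following Proposition \ref{action of IH: trivial case}, the derivation of the $T_{\omega_1}$-action used only that $T_{\omega_1}$ itself is well-defined, equivalently that $\omega_1 \in W(\chi_\sigma)$; this holds throughout the semi-regular case, so one can simply re-read Cases $1)$, $2)$, $3)$ of the earlier proof in the present notation.

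For part $(2)$, I would apply \eqref{formula of T_g} with $g = \omega_1 t_{(0, -1)}$. First choose a set of representatives for $I_1 / (I_1 \cap g^{-1} I_1 g)$, which after a standard computation amounts to coset representatives modulo appropriate powers of $\mathfrak{p}_F$ in a product of root subgroups (the $(1,2)$- and $(1,3)$-entries play the key role). For each representative $i$, analyze the double coset $KZ \, t_{\vec{a}} \, i \, g^{-1} I_1$. The $2\times 2$ Bruhat identity exploited in Case $1)$ of Proposition \ref{action of IH: trivial case} should then show that, after summing, the support concentrates on the single coset $KZ \, t_{(a_2, a_1 - 1)} I_1$, with the value at $\omega' t_{(a_2, a_1 - 1)}$ (for the appropriate $\omega' \in W_0$) coming out to $v_0$. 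This yields the desired $f_{\omega', (a_2, a_1 - 1)}$.

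For part $(3)$, I would run the same machinery with $g = \omega_1 t_{(1, 0)}$. The cleanest path is to argue by isotypic containment: the support analysis should identify the image of $T_{\omega_1 t_{(1, 0)}}$ with functions supported over double cosets $KZ \, t_{\vec{b}} I_1$ for $\vec{b} \in S_{\omega'}$ with $\omega' \in W_0 \setminus W(\chi_\sigma)$; by Proposition \ref{the action of Iwahori}$(1)$ and Corollary \ref{basis of isotypic}, any nonzero function of this form lies in the $\chi_\sigma^{\omega'}$-isotypic and hence outside $(\textnormal{ind}^G _{KZ} \sigma)^{IZ, \chi_\sigma}$. Since $T_{\omega_1 t_{(1, 0)}} \in \mathcal{H}(IZ, \chi_\sigma)$ preserves the $\chi_\sigma$-isotypic, the surviving terms must cancel and the image is forced to be zero. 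Concretely, this cancellation should be visible as a character sum over $k_F^\times$ factoring through the nontrivial character $\eta$, which of course vanishes.

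The main obstacle is the combinatorial bookkeeping in the coset analysis, particularly for $(3)$: one must precisely identify which double cosets appear in the support and verify that none of them sits in $S_{Id} \cup S_{\omega_1}$. The same Bruhat-type $2 \times 2$ identity is the engine throughout, but for $(3)$ the crucial extra input is that the nontrivial factor $\eta$ of $\chi_\sigma$ lives exactly on the torus direction along which $t_{(1, 0)}$ shifts; making this observation rigorous --- either through the support-plus-isotypic argument or through an explicit character-sum cancellation --- is the step I would most expect to require care.
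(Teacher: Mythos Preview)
Your treatment of parts $(1)$ and $(2)$ is essentially the paper's approach. One small slip: for $g=\omega_1 t_{(0,-1)}$ the quotient $I_1/(I_1\cap g^{-1}I_1 g)$ is parametrized by the $(2,3)$-entry (as the paper computes), not the $(1,2)$- or $(1,3)$-entries; this does not affect the strategy.

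For part $(3)$, however, your plan has a real gap. The support analysis does \emph{not} land you outside $S_{Id}\cup S_{\omega_1}$: in fact the paper shows that $\vec{a}\mid T_{\omega_1 t_{(1,0)}}$ is supported on cosets $KZ\,t_{\vec{b}}\,I_1$ with $\vec{b}\in S_{\omega_1}$, i.e.\ it lies in $\mathcal{V}_{\omega_1}\subset (\textnormal{ind}^G_{KZ}\sigma)^{IZ,\chi_\sigma}$. So your isotypic argument collapses --- the image already sits in the $\chi_\sigma$-isotypic, and nothing forces it to vanish. Your fallback character-sum heuristic is also misaimed: $\eta$ is the \emph{third} diagonal factor of $\chi_\sigma=1\otimes 1\otimes\eta$, whereas $t_{(1,0)}$ shifts the \emph{first} coordinate, so there is no obvious $\eta$-sum to exploit.

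The paper's argument for $(3)$ is different and short: having established that the image lies in $\mathcal{V}_{\omega_1}$, one applies the Hecke-algebra relation $T_{\omega_1 t_{(1,0)}}\cdot T_{\omega_1 t_{(0,-1)}}=0$ from Proposition~\ref{structure of Iwahori-Hecke: semi-regular}. By part $(2)$ the operator $T_{\omega_1 t_{(0,-1)}}$ acts injectively on $\mathcal{V}_{\omega_1}$ (it sends distinct basis vectors $f_{\omega_1,(a_1,a_2)}$ to distinct basis vectors $f_{Id,(a_2,a_1-1)}$), so anything in $\mathcal{V}_{\omega_1}$ killed by $T_{\omega_1 t_{(0,-1)}}$ must be zero. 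This is the missing idea in your proposal.
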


\begin{proof}
In the current case, the operator $T_{\omega_1}$ is well-defined (Corollary \ref{basis of I-W}). The argument of $(2)$ of Proposition \ref{action of IH: trivial case} works here, as that does not involve the specific information of the weight in consideration. Note that $\omega$ here lies in $\{Id, \omega_1\}$. So the first case only happens for $\omega =\omega_1$, and the other two cases happen for $\omega =Id$.

For $(2)$, we check first that the function $f_{\omega, \vec{a}} \mid T_{\omega_1\cdot t_{(0, -1)}}$ is supported on $KZ t_{\omega_1 \cdot \vec{a} +(0, -1)} I_1= KZ t_{(a_2, a_1 -1)}I_1$. Write $\omega_1 t_{0, -1}$ as $g$ for a moment. We compute that
\begin{center}
$f_{\omega, \vec{a}} \mid T_{\omega_1\cdot t_{(0, -1)}} (t_{(a_2, a_1 -1)})=\sum_{i\in I_1/ I_1 \cap g^{-1} I_1 g} f_{\omega, \vec{a}}(t_{(a_2, a_1 -1)} i\cdot t_{(0, 1)}\omega_1)$.
\end{center}
We note the identification
\begin{center}
$I_1/ I_1 \cap g^{-1} I_1 g \cong \begin{pmatrix} 1  & 0 & 0 \\ 0  & 1 & k_F\\
0 & 0 & 1
\end{pmatrix}.$
\end{center}

It remains to check the sum above is equal to $\omega_1 \omega^{-1} v_0$. Such a value is contributed by the term $i=Id$. For $i \neq Id$, we claim that
\begin{center}
$t_{(a_2, a_1 -1)} i\cdot t_{(0, 1)}\omega_1 \notin KZ t_{\vec{a}} I_1$.
\end{center}
This can be verified by combing the identification above with argument similar to that of Proposition \ref{action of IH: trivial case}. Indeed, we have
\begin{center}
$t_{(a_2, a_1 -1)} i\cdot t_{(0, 1)}\omega_1 \in KZ t_{(2-a_1, 1+a_2 -a_1)}I_1$,
\end{center}
where we note $a_1  \leq 0$.

For $(3)$, by estimating its support we firstly show $\vec{a} \mid T_{\omega_1 t_{(1, 0)}}$ lies in $\mathcal{V}_{\omega_1}$. Then the assertion follows from $(2)$ and $T_{\omega_1 t_{(1, 0)}}\cdot T_{\omega_1\cdot t_{(0, -1)}} =0$ (Proposition \ref{structure of Iwahori-Hecke: semi-regular}).
\end{proof}

\begin{proposition}\label{proper in the semi-regular case}
We have

$(1)$.~For $(a_1, a_2)\in S_{Id}$,
\begin{center}
$(a_1 -1, a_2)=(a_1, a_2)\mid T_{\omega_1 \cdot t_{(0, -1)}} T_{\omega_1}=(a_1, a_2)\mid T_{t_{(-1, 0)}}$,

$(a_1 -1, a_2 -1)= (a_1, a_2)\mid T^2 _{\omega_1 \cdot t_{(0, -1)}}=(a_1, a_2)\mid T_{t_{(-1, -1)}}$.
\end{center}

$(2)$.~ ~For $(a_1, a_2)\in S_{\omega_1}$,
\begin{center}
$(a_1, a_2 -1)= (a_1, a_2) \mid T_{\omega_1} T_{\omega_1 \cdot t_{(0, -1)}}= (a_1, a_2) \mid T_{t_{(0, -1)}}$,

$(a_1 -1, a_2 -1)= (a_1, a_2)\mid T^2 _{\omega_1 \cdot t_{(0, -1)}}=(a_1, a_2) \mid T_{t_{(-1, -1)}} $.
\end{center}
\end{proposition}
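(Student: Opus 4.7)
The plan is to chain together the two basic actions from Proposition \ref{action of IH: semi-regular case} (the piecewise formula for $T_{\omega_1}$ and the rule $\vec{a}\mid T_{\omega_1\cdot t_{(0,-1)}}=(a_2,a_1-1)$), and then to recognise each composite as a single $T_{t_{\vec{a}}}$ via a length-additive product in $\mathcal{H}(IZ,\chi_\sigma)$. The essential bookkeeping is to track which $S_\omega$ the current vector lies in, so that the correct branch of $T_{\omega_1}$ is invoked; in every case below only the first branch ($a_1>a_2$) will be used, in parallel with the proof of Proposition \ref{proper in the Iwahori--Hecke case}.

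For the identities in part~$(1)$, start with $(a_1,a_2)\in S_{Id}$, i.e.\ $a_1\leq a_2\leq 0$. Part~$(2)$ of Proposition \ref{action of IH: semi-regular case} gives $(a_1,a_2)\mid T_{\omega_1\cdot t_{(0,-1)}}=(a_2,a_1-1)$, and $a_1\leq a_2\leq 0$ puts this vector in $S_{\omega_1}$. Since its first coordinate $a_2$ strictly exceeds the second coordinate $a_1-1$, the first branch of part~$(1)$ of the same proposition delivers $(a_2,a_1-1)\mid T_{\omega_1}=(a_1-1,a_2)$, which is the first identity. Reapplying part~$(2)$ to $(a_2,a_1-1)\in S_{\omega_1}$ produces $(a_1-1,a_2-1)$, the second identity. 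The argument for part~$(2)$ is entirely symmetric: for $(a_1,a_2)\in S_{\omega_1}$ one has $a_1>a_2$, so $T_{\omega_1}$ sends $(a_1,a_2)$ to $(a_2,a_1)\in S_{Id}$, and then $T_{\omega_1\cdot t_{(0,-1)}}$ yields $(a_1,a_2-1)$; the double application of $T_{\omega_1\cdot t_{(0,-1)}}$ is handled exactly as before, after checking $(a_2,a_1-1)\in S_{Id}$ via $a_2\leq a_1-1\leq -1$.

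For the second equalities in each case, the matrix identities
\[
\omega_1\cdot t_{(0,-1)}\cdot\omega_1=t_{(-1,0)},\qquad (\omega_1\cdot t_{(0,-1)})^{2}=t_{(-1,-1)},\qquad \omega_1\cdot(\omega_1\cdot t_{(0,-1)})=t_{(0,-1)}
\]
in $G$, combined with the length-additive product law $T_gT_{g'}=T_{gg'}$ in $\mathcal{H}(IZ,\chi_\sigma)$ (in the spirit of the second equalities in Proposition \ref{proper in the Iwahori--Hecke case}), identify the three composites with $T_{t_{(-1,0)}}$, $T_{t_{(-1,-1)}}$, and $T_{t_{(0,-1)}}$ respectively.

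The main obstacle is precisely this last step: ensuring length-additivity, i.e.\ that each composite really is the single $T_{gg'}$ rather than $T_{gg'}$ plus corrections from shorter double cosets. The cleanest safety net is to compute $(a_1,a_2)\mid T_{t_{\vec{a}}}$ directly from formula~\eqref{formula of T_g} on an arbitrary basis vector $f_{\omega,\vec{b}}$ of Proposition \ref{the pro-p iwahori invariants} and compare with the iterated action above; agreement on the whole basis then forces equality of operators, and hence of algebra elements.
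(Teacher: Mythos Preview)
Your proposal is correct and follows essentially the same line as the paper's proof: the first equalities come directly from chaining Proposition \ref{action of IH: semi-regular case}, and the second equalities are the operator identities $T_{\omega_1 \cdot t_{(0,-1)}}T_{\omega_1}=T_{t_{(-1,0)}}$ etc., which the paper justifies in one phrase (``by considering the braid relation on the extended Weyl group''). Your explicit matrix identities and length-additivity discussion simply unpack that phrase; the ``safety net'' you propose is not needed, since the products in question are indeed length-additive in the extended affine Weyl group.
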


\begin{proof}
In every item in the statement, the assertion for the first equality simply follow from last Proposition. The assertion for the second equality holds because the two operators coincide, e.g., $T_{\omega_1 \cdot t_{(0, -1)}} T_{\omega_1}= T_{t_{(-1, 0)}}$,  by considering the braid relation on the extended Weyl group.
\end{proof}

\subsection{The regular case for $N=3$}

Suppose $\chi_\sigma$ is regular, i.e., $W(\chi_\sigma)= Id$. In this case, we know from Corollary \ref{basis of I-W} that a basis of $(\text{ind}^G _{KZ} \sigma)^{IZ, \chi_\sigma}$ is $\{f_{Id, \vec{a}} \mid \vec{a} \in S_{Id}\}$. We have the following:
\begin{proposition}\label{proper in the regular case}
For $(a_1, a_2)\in S_{Id}$, we have

$(1)$.~ $(a_1, a_2)\mid T_{t_{(-1, 0)}} = (a_1 -1, a_2), ~(a_1, a_2)\mid T_{t_{(-1, -1)}}= (a_1 -1, a_2 -1)$.

$(2)$.~$(a_1, a_2) \mid \mathcal{T} =0$ for $\mathcal{T} \in \{T_{t_{(1, 0)}}, T_{t_{(0, 1)}}, T_{t_{(0, -1)}}, T_{t_{(1, 1)}}\}$.

\end{proposition}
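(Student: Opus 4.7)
The plan is to deduce (1) by a character-free reinterpretation of the calculation in Proposition \ref{proper in the Iwahori--Hecke case} (1), and then to deduce (2) from (1) using the algebra relations of Proposition \ref{structure of Iwahori-Hecke: regular} together with the commutativity of $\mathcal{H}(IZ,\chi_\sigma)$.

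For (1), the key observation (see the Remark after Proposition \ref{action of IH: trivial case}) is that the action formulas for $T_\gamma$ and $T_{\omega_1}$ on $(\textnormal{ind}^G_{KZ}\sigma)^{I_1}$ are valid without any hypothesis on $W(\chi_\sigma)$; they define operators on the full pro-$p$-Iwahori invariants regardless. In the regular case $T_\gamma$ and $T_{\omega_1}$ no longer individually belong to $\mathcal{H}(IZ,\chi_\sigma)$, but the compositions $(T_\gamma T_{\omega_1})^2$ and $(T_\gamma T_{\omega_1}T_\gamma)^2$ still make perfect sense as endomorphisms of $(\textnormal{ind}^G_{KZ}\sigma)^{I_1}$. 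I would rerun the bookkeeping of the proof of Proposition \ref{proper in the Iwahori--Hecke case} (1) verbatim: starting from $f_{Id,(a_1,a_2)}$ with $a_1\leq a_2\leq 0$, walk the state through the corresponding chain of $(\omega,\vec{c})$-pairs, checking at each step that the intermediate pair lies in the appropriate $S_\omega$ and picks up the prescribed sign or coordinate swap, so that the final output is $f_{Id,(a_1-1,a_2)}$, respectively $f_{Id,(a_1-1,a_2-1)}$. The identification of these composed operators with the genuine Hecke elements $T_{t_{(-1,0)}}$ and $T_{t_{(-1,-1)}}$ is via the braid relations on the extended affine Weyl group, a character-free statement already invoked in the proof of Proposition \ref{proper in the Iwahori--Hecke case}.

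For (2), part (1) exhibits $T_{t_{(-1,0)}}$ and $T_{t_{(-1,-1)}}$ as \emph{injective} linear endomorphisms of $(\textnormal{ind}^G_{KZ}\sigma)^{IZ,\chi_\sigma}$: they send $f_{Id,(a_1,a_2)}$ to the distinct basis elements $f_{Id,(a_1-1,a_2)}$ and $f_{Id,(a_1-1,a_2-1)}$, and both shifted pairs remain in $S_{Id}$ since $a_1-1\leq a_2-1\leq a_2\leq 0$. Each of the four operators in (2) can now be paired with one of these injective operators via a vanishing relation from Proposition \ref{structure of Iwahori-Hecke: regular}: use $T_{t_{(1,0)}}T_{t_{(-1,0)}}=0$ for $\vec{b}=(1,0)$, $T_{t_{(0,1)}}T_{t_{(-1,-1)}}=0$ for $\vec{b}=(0,1)$, $T_{t_{(1,1)}}T_{t_{(-1,-1)}}=0$ for $\vec{b}=(1,1)$, and, by commuting the listed $T_{t_{(-1,0)}}T_{t_{(0,-1)}}=0$, the relation $T_{t_{(0,-1)}}T_{t_{(-1,0)}}=0$ for $\vec{b}=(0,-1)$. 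Applying each relation to $f_{Id,\vec{a}}$ yields $(f_{Id,\vec{a}}\mid T_{t_{\vec{b}}})\mid T_{t_{\vec{b}'}}=0$, and the injectivity of $T_{t_{\vec{b}'}}$ then forces $f_{Id,\vec{a}}\mid T_{t_{\vec{b}}}=0$.

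The main obstacle is the case-by-case bookkeeping in (1): at each stage of the chain one must verify that the intermediate pair falls into the correct $S_\omega$ and triggers the correct sub-case of the $T_{\omega_1}$ action from Proposition \ref{action of IH: trivial case}, exactly as was done for $W(\chi_\sigma)=W_0$. Once (1) is in hand, (2) is essentially formal.
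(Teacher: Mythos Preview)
Your argument for part (2) is exactly the paper's own: once (1) is established, the relations in Proposition \ref{structure of Iwahori-Hecke: regular} combined with the injectivity of $T_{t_{(-1,0)}}$ (and, for $T_{t_{(0,1)}}$, of $T_{t_{(-1,-1)}}$) force the vanishing. The paper phrases this tersely but the content is identical.

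For part (1), however, you take a genuinely different route from the paper. The paper computes $f_{Id,\vec a}\mid T_{t_{(-1,0)}}$ \emph{directly} from the defining formula \eqref{formula of T_g}: it identifies $I_1/(I_1\cap t_{(1,0)}I_1 t_{(-1,0)})$ with the upper-unipotent matrices having $(1,2)$ and $(1,3)$ entries in $k_F$, checks that only the coset $i=\mathrm{Id}$ lands in the support $KZ t_{\vec a}I_1$, and then traces where the remaining cosets go (outside the support) via case-by-case double-coset manipulations. Your approach instead factors $T_{t_{(-1,0)}}$ as $(T_\gamma T_{\omega_1})^2$ \emph{at the level of the pro-$p$-Iwahori--Hecke algebra} acting on all of $(\textnormal{ind}^G_{KZ}\sigma)^{I_1}$, and then simply replays the chain of basis moves from Proposition \ref{action of IH: trivial case}. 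This is valid because the braid relation $T_w T_{w'}=T_{ww'}$ for additive lengths is a universal identity in $\mathcal{H}(G,I_1)$, independent of any character, and $(\gamma\omega_1)^2=\varpi\, t_{(-1,0)}$ has length $2$; the action formula \eqref{formula of T_g} likewise depends only on the $I_1$-double coset. The payoff of your route is economy: you reuse Proposition \ref{action of IH: trivial case} wholesale and avoid the three-way case split on $(t_1,t_2)$ that the paper has to carry out. The paper's direct computation, on the other hand, is self-contained and does not require articulating why the factorisation through pro-$p$ Hecke operators (which individually lie outside $\mathcal{H}(IZ,\chi_\sigma)$ in the regular case) is legitimate---a point you should make explicit in your write-up.
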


\begin{proof}
We only prove $(a_1, a_2)\mid T_{t_{(-1, 0)}} = (a_1 -1, a_2)$ in detail. First we check that the function $f_{Id, \vec{a}}\mid T_{t_{(-1, 0)}}$ is supported on $KZ t_{(a_1 -1, a_2)} I_1$.

We compute
\begin{center}
$f_{Id, \vec{a}}\mid T_{t_{(-1, 0)}} (t_{(a_1 -1, a_2)}) = \sum_{i \in I_1 / I_1 \cap t_{(1, 0)} I_1 t_{(-1, 0)}}~ f_{Id, \vec{a}} (t_{(a_1 -1, a_2)} i t_{(1, 0)})$.
\end{center}
We note the identification
\begin{center}
$I_1 / I_1 \cap t_{(1, 0)} I_1 t_{(-1, 0)} \cong \begin{pmatrix} 1  & k_F & k_F \\ 0  & 1 & 0\\
0 & 0 & 1
\end{pmatrix}.$
\end{center}

For $i=Id$, we get $f_{Id, \vec{a}} (t_{\vec{a}})= v_0$. 

For $i \neq Id$, we claim that $t_{(a_1 -1, a_2)} i t_{(1, 0)} \notin KZ t_{\vec{a}} I_1$.

By the identification above, we write $i$ as
\begin{center}
$\begin{pmatrix} 1  & t_1 & t_2\\ 0  & 1 & 0\\
0 & 0 & 1
\end{pmatrix}$
\end{center}
for $t_1, t_2 \in k_F$.

When $t_1 \neq 0, t_2=0$, by a similar argument as that of Proposition \ref{action of IH: trivial case}, we have $t_{(a_1 -1, a_2)} i t_{(1, 0)} \in KZ t_{(a_2 +1, a_1 -1)} I_1$.

When $t_1\cdot t_2 \neq 0$, starting from the case above, we show first that $t_{(a_1 -1, a_2)} i t_{(1, 0)}$ shares the same double coset with
\begin{center}
$t_{(a_2 +1, a_1 -1)} \begin{pmatrix} 1  & 0 & \varpi^{-1} t_2\\ 0  & 1 & 0\\
0 & 0 & 1
\end{pmatrix}$.
\end{center}
Then by an argument similar to that of semi-regular case, we can check
\begin{center}
$t_{(a_2 +1, a_1 -1)} \begin{pmatrix} 1  & 0 & \varpi^{-1} t_2\\ 0  & 1 & 0\\
0 & 0 & 1
\end{pmatrix} \in KZ t_{(1- a_2, a_1 -1 -a_2)} I_1$.
\end{center}

When $t_1 =0, t_2 \neq 0$, we have already shown in the semi-regular case that
\begin{center}
$t_{(a_1 -1, a_2)} i t_{(1, 0)} \in KZ t_{(2- a_1, 1+a_2 -a_1)} I_1$.
\end{center}

In all cases, the condition $a_1 \leq a_2 \leq 0$ is essentially used in the argument. The claim is verified. The assertion $(a_1, a_2)\mid T_{t_{(-1, 0)}} = (a_1 -1, a_2)$ is proved.

\smallskip
Now the assertions in $(2)$ about $T_{(1, 0)}, T_{(1, 1)}, T_{(0, -1)}$ follows from what we have just proved and Proposition \ref{structure of Iwahori-Hecke: regular}.

\emph{The remaining two assertions can be proved by the same manner and the details are omitted.}
\end{proof}

\section{Proof of Theorem \ref{main result: intro}}

\begin{definition}
For an $\vec{a}\in S_\omega$, we say a vector $\vec{b}\in S_\omega$ is \textbf{proper} to $\vec{a}$ if
\begin{center}
$\omega t_{\vec{b}}\cdot I \cdot (\omega t_{\vec{b}})^{-1}\cap K \subseteq \omega t_{\vec{a}}\cdot I \cdot (\omega t_{\vec{a}})^{-1}\cap K$.
\end{center}
Furthermore, we say $\vec{b}$ is \textbf{strictly} proper to $\vec{a}$ if the containing above is strict.
\end{definition}

\smallskip

\begin{theorem}\label{main theorem for GL_3 in appendix}
Assume $N=3$. Let $\vec{a}\in S_{Id}$. If $\vec{a}^\ast$ is proper to $\vec{a}$ in $S_{Id}$, then there exists (not unique in general) an operator $\mathcal{T}\in \mathcal{H}(IZ, \chi_\sigma)$ such that
\begin{center}
$f_{Id, \vec{a}}\mid \mathcal{T}= f_{Id, \vec{a}^\ast}$.
\end{center}

\end{theorem}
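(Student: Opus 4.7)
My plan is to realize $\mathcal{T}$ as an explicit product of the two ``elementary downward shifts'' $T_{t_{(-1,0)}}$ and $T_{t_{(-1,-1)}}$, whose action on basis vectors supported in $S_{Id}$ has already been computed in Propositions~\ref{proper in the Iwahori--Hecke case}, \ref{proper in the semi-regular case} and \ref{proper in the regular case} for the Iwahori--Hecke, semi-regular and regular cases respectively. Crucially the effect on $f_{Id,\vec a}$ is case-uniform---namely shifting $\vec a$ by $(-1,0)$ or $(-1,-1)$---even though the algebraic expressions realizing these operators in the three Hecke algebras differ.

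The first step is to translate the proper condition into combinatorial inequalities. Write $\vec a=(a_1,a_2)$ and $\vec a^\ast=(a_1^\ast,a_2^\ast)$ with $a_1\le a_2\le 0$ and $a_1^\ast\le a_2^\ast\le 0$. A direct entry-by-entry computation shows that the lower $(i,j)$-entries of $t_{\vec a}\cdot I\cdot t_{\vec a}^{-1}\cap K$ are the ideals $\mathfrak{p}_F^{a_i-a_j+1}$, so the containment defining properness is equivalent for $N=3$ and $\omega=Id$ to the three inequalities
\begin{center}
$a_1^\ast\le a_1,\qquad a_2^\ast\le a_2,\qquad a_2^\ast-a_1^\ast\ge a_2-a_1.$
\end{center}
Setting $k_i:=a_i-a_i^\ast\ge 0$, these collapse to $k_1\ge k_2\ge 0$.

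The candidate operator is then
\begin{center}
$\mathcal{T}:= T_{t_{(-1,-1)}}^{\,k_2}\cdot T_{t_{(-1,0)}}^{\,k_1-k_2}\in \mathcal{H}(IZ,\chi_\sigma),$
\end{center}
which is well-defined in each case since $Id\in W(\chi_\sigma)$ (Corollary~\ref{basis of I-W}). I would verify $f_{Id,\vec a}\mid\mathcal{T}=f_{Id,\vec a^\ast}$ by a two-stage induction: first apply $T_{t_{(-1,-1)}}$ successively $k_2$ times, producing $f_{Id,(a_1-j,a_2-j)}$ at step $j$; then apply $T_{t_{(-1,0)}}$ successively $(k_1-k_2)$ times, producing $f_{Id,(a_1-k_2-j',a_2-k_2)}$ at step $j'$. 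The terminal vector is exactly $f_{Id,\vec a^\ast}$, and at each step the case-appropriate proposition (Proposition~\ref{proper in the Iwahori--Hecke case}(1), \ref{proper in the semi-regular case}(1), or \ref{proper in the regular case}(1)) delivers the one-step shift.

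The only delicate point, and arguably the main obstacle, is to monitor that every intermediate vector still lies in $S_{Id}$ so that the shift formulas apply; however this turns out to be automatic, since both shifts $(-1,-1)$ and $(-1,0)$ preserve the defining inequalities $a_1\le a_2\le 0$ of $S_{Id}$, using only $k_1\ge k_2\ge 0$, which is precisely the properness of $\vec a^\ast$. The non-uniqueness noted in the statement reflects the freedom to interleave the two shifts differently, or to absorb commutation and quadratic relations inside $\mathcal{H}(IZ,\chi_\sigma)$; for the existence claim, any one such ordering produces the required $\mathcal{T}$.
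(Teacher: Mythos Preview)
Your proposal is correct and follows essentially the same approach as the paper: the paper's proof simply says to apply Propositions~\ref{proper in the Iwahori--Hecke case}, \ref{proper in the semi-regular case}, \ref{proper in the regular case} repeatedly, which is precisely what you do, except that you make the combinatorial content of ``proper'' explicit ($k_1\ge k_2\ge 0$) and write down a concrete $\mathcal{T}$ as a product of powers of $T_{t_{(-1,0)}}$ and $T_{t_{(-1,-1)}}$. Your verification that all intermediate vectors remain in $S_{Id}$ is the one point the paper leaves implicit, and you handle it correctly.
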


\begin{proof}

For any vector $\vec{a}^\ast$ \emph{strictly proper} to $\vec{a}$ we can obtain the corresponding function $f_{Id, \vec{a}^\ast}$ by applying Proposition \ref{proper in the Iwahori--Hecke case},\ref{proper in the semi-regular case},\ref{proper in the regular case} repeatedly (but such process might not be unique anymore).
\end{proof}

Based on last theorem, we have the following corollary.
\begin{corollary}\label{GL_3 fails}
There are non-zero $\mathcal{H}(IZ, \chi_\sigma)$-submodules of $(\textnormal{ind}^G _{KZ} \sigma)^{IZ, \chi_\sigma}$ of infinite codimension.
\end{corollary}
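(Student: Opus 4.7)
My plan is to exhibit a specific cyclic $\mathcal{H}(IZ,\chi_\sigma)$-submodule and argue that it has infinite codimension. Fix $\vec{a}_0 = (a_1, a_2) \in S_{Id}$ with $a_2 < 0$ (take for instance $\vec{a}_0 = (-1, -1)$), and set $M := \mathcal{H}(IZ, \chi_\sigma) \cdot f_{Id, \vec{a}_0}$. Since $f_{Id, \vec{a}_0}$ is a non-zero basis vector of the $(IZ,\chi_\sigma)$-isotypic (Corollary \ref{basis of isotypic}, as $Id \in W(\chi_\sigma)$ in every case), $M \neq 0$. To conclude infinite codimension, I will show that $f_{Id,(b_1, 0)} \notin M$ for every $b_1 \leq 0$; since $\{f_{Id,(b_1, 0)}\}_{b_1 \leq 0}$ is an infinite linearly independent family of basis vectors of the ambient space, this suffices.

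The non-containment follows from an upper bound on $M$ obtained by case analysis on $W(\chi_\sigma)$. In the \emph{regular} case ($W(\chi_\sigma) = \{Id\}$), Proposition \ref{proper in the regular case} shows that among the six generators of the algebra listed in Proposition \ref{structure of Iwahori-Hecke: regular}, only $T_{t_{(-1,0)}}$ and $T_{t_{(-1,-1)}}$ act non-trivially on $f_{Id, \vec{a}_0}$; iterating, $M = \mathrm{span}\{f_{Id, (a_1-m-n,\, a_2-n)} : m, n \geq 0\}$, whose second coordinates are bounded by $a_2 < 0$, so the claim holds. In the \emph{semi-regular} case ($W(\chi_\sigma) = \{Id, \omega_1\}$), an analogous argument using Propositions \ref{action of IH: semi-regular case} and \ref{proper in the semi-regular case} applies: $T_{\omega_1 t_{(1,0)}}$ vanishes everywhere, $T_{\omega_1}$ either scales or swaps, and $T_{\omega_1 t_{(0,-1)}}$ alternates between $\mathcal{V}_{Id}$ and $\mathcal{V}_{\omega_1}$ while strictly decreasing the relevant coordinate, so again the $\mathcal{V}_{Id}$-projection of $M$ has second coordinate $\leq a_2 < 0$.

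The \emph{Iwahori--Hecke} case ($W(\chi_\sigma) = W_0$) will be the main obstacle, since $T_\gamma$ (of order $3$ by Proposition \ref{structure of Iwahori-Hecke: degenerate case}) cycles $f_{Id, \vec{a}_0}$ through $\mathcal{V}_{\omega_2}$ and $\mathcal{V}_{\omega_2^2}$, and $T_{\omega_1}$ further carries things into $\mathcal{V}_{\omega_1}, \mathcal{V}_{\omega_1 \omega_2}, \mathcal{V}_{\omega_2 \omega_1}$. To control $M$, I will assign to each basis vector $f_{\omega, \vec{a}}$ a canonical $S_{Id}$-representative $\vec{a}^\flat \in S_{Id}$ obtained by applying the appropriate $T_\gamma$-powers and at most one $T_{\omega_1}$-swap (guided by Proposition \ref{action of IH: trivial case}) to bring $\omega$ back to $Id$; the explicit list of $S_\omega$'s in Section 4.2 together with Proposition \ref{proper in the Iwahori--Hecke case} shows that in each of the six $\mathcal{V}_\omega$'s the ``translation'' operators $(T_\gamma T_{\omega_1})^2$, $(T_\gamma T_{\omega_1} T_\gamma)^2$, etc.\ act on $\vec{a}^\flat$ precisely as $T_{t_{(-1,0)}}$ and $T_{t_{(-1,-1)}}$ do in $S_{Id}$, monotonically decreasing it in the proper order. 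Consequently every basis vector appearing with non-zero coefficient in $M$ has $S_{Id}$-representative proper to $\vec{a}_0$, and since $(b_1, 0)$ fails to be proper to $(a_1, a_2)$ when $a_2 < 0$ (because $b_2 = 0 > a_2$), we obtain $f_{Id, (b_1, 0)} \notin M$. Theorem \ref{main theorem for GL_3 in appendix} provides the matching lower containment, confirming that $M$ is exactly the ``proper cone'' in $\mathcal{V}_{Id}$ together with its cyclic images, and ensures the submodule is non-trivial. The main technical subtlety is verifying monotonicity of $\vec{a} \mapsto \vec{a}^\flat$ uniformly across all six $\mathcal{V}_\omega$'s, which is where the bulk of the bookkeeping from Proposition \ref{proper in the Iwahori--Hecke case} is used.
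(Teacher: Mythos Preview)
Your treatment of the regular and semi-regular cases is essentially the paper's: generate a cyclic submodule at some $f_{Id,\vec{a}_0}$ with $a_2<0$, observe via Propositions~\ref{proper in the regular case} and~\ref{action of IH: semi-regular case} that the only nonvanishing generators push the second coordinate of the $S_{Id}$-component weakly down, and conclude that the infinite family $\{f_{Id,(b_1,0)}\}$ is missed. The paper makes the same argument, phrased as ``$M\cap\mathcal V_{Id}=M'$'' with $M'$ the proper cone.

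In the Iwahori--Hecke case, however, your proposed invariant $\vec a^{\flat}$ is \emph{not} monotone under the generators, so the induction you outline does not go through. Concretely, define $\vec a^{\flat}$ by pushing $(\omega,\vec a)$ to $S_{Id}$ via $T_\gamma$-powers (and one $T_{\omega_1}$-swap when $\omega$ lies in the $\omega_1$-coset of $\langle\omega_2\rangle$). Then $T_\gamma$ preserves $\vec a^{\flat}$ and the swap $\omega_2\to\omega_2\omega_1$ shifts $\vec a^{\flat}$ by $(-1,0)$, which is fine; but the swap $\omega_1\omega_2\to\omega_2^2$ shifts $\vec a^{\flat}$ by $(0,-1)$, which is \emph{not} in the cone generated by $(-1,0)$ and $(-1,-1)$. (For instance, starting at $\vec a_0=(-1,-1)$ and applying $T_\gamma T_{\omega_1}T_\gamma T_\gamma$ lands you at $(\omega_1\omega_2,(3,1))$ with $\vec a^{\flat}=(-2,-1)$; one further $T_{\omega_1}$ gives $(\omega_2^2,(1,3))$ with $\vec a^{\flat}=(-2,-2)$, and $(-2,-2)$ is not proper to $(-2,-1)$.) So ``monotonicity of $\vec a\mapsto\vec a^{\flat}$ uniformly across all six $\mathcal V_\omega$'s''---exactly the point you flagged as the main subtlety---is false as stated.

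The paper sidesteps this by never tracking an invariant across sectors. It restricts attention to functions in $M$ supported entirely in $\bigcup_{\vec d\in S_{Id}}KZt_{\vec d}I_1$, and argues (Case~c of the proof of Corollary~\ref{GL_3 fails}) that any monomial in $T_\gamma,T_{\omega_1}$ carrying $f_{Id,\vec a_0}$ back into $\mathcal V_{Id}$ can be replaced, modulo scalars and $T_\gamma^3=1$, by a word in $(T_\gamma T_{\omega_1})^2$ and $(T_\gamma T_{\omega_1}T_\gamma)^2$. This is a statement about closed walks in the six-vertex transition graph rather than about step-by-step monotonicity, and it is what makes the $(-1,0)$ and $(0,-1)$ contributions always appear \emph{paired}. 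Your global conclusion is correct, but to reach it you need this walk classification (or an equivalent device), not the local monotonicity you propose.
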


\begin{proof}
Consider the submodule $M$ of $(\textnormal{ind}^G _{KZ} \sigma)^{IZ, \chi_\sigma}$ generated by a single function $f_{Id, \vec{a}}$, for an $\vec{a}=(a_1, a_2)\in S_{Id}$ satisfying $a_1 < a_2$. By Theorem \ref{main theorem for GL_3 in appendix}, the module $M$ contains the subspace $M'$ spanned by functions
\begin{center}
 $\{f_{Id, \vec{b}} \mid \vec{b}~\text{is~proper~to}~\vec{a} \}$
\end{center}
We claim that if a function $f \in M$ is supported in the cosets $\bigcup_{\vec{d}\in S_{Id}} KZ t_{\vec{d}} I_1$, then it lies in $M'$. Suppose that $f= f_{Id, \vec{a}} \mid \mathcal{T}$ for some $\mathcal{T} \in \mathcal{H}(IZ, \sigma)$.

We verify the claim case by case.

Case $a)$. $W(\chi_\sigma)= Id$. By Proposition \ref{structure of Iwahori-Hecke: regular} we write $\mathcal{T}$ as a polynomial of $T_{t_{(1, 0)}}, T_{t_{(-1, 0)}}, T_{t_{(0, 1)}}$ and $T_{t_{(0, -1)}}, T_{t_{(1, 1)}}, T_{t_{(-1, -1)}}$. By Proposition \ref{proper in the regular case}, we see $\mathcal{T}$ can be replaced by a polynomial of $T_{t_{(-1, 0)}}$ and $T_{t_{(-1, -1)}}$.

Case $b)$.~$W(\chi_\sigma)= \{Id, \omega_1\}$. We write $\mathcal{T}$ as a polynomial of $T_{\omega_1}$, $T_{\omega_1\cdot t_{(1, 0)}}$, and $T_{\omega_1\cdot t_{(0, -1)}}$ (Proposition \ref{structure of Iwahori-Hecke: semi-regular}). Assume $f$ is supported in the cosets $\bigcup_{\vec{d}\in S_{Id}} KZ t_{\vec{d}} I_1$. We can firstly eliminate from $\mathcal{T}$ any monomial with the operator $T_{\omega_1 \cdot t_{1, 0}}$ (Proposition \ref{action of IH: semi-regular case}). As a polynomial of $T_{\omega_1}$ acts as a scalar on $f_{Id, \vec{a}}$ (Proposition \ref{action of IH: semi-regular case}), we may assume any monomial of it does not appear in $\mathcal{T}$ \footnote{The same remark applies to Case $c)$.}. Then $\mathcal{T}$ can be substituted with a polynomial of $T_{\omega_1\cdot t_{(0, -1)}} T_{\omega_1}$ and $T^2 _{\omega_1\cdot t_{(0, -1)}}$, as anything else would force $f$ to be supported outside $\bigcup_{\vec{d}\in S_{Id}} KZ t_{\vec{d}} I_1$ (Proposition \ref{action of IH: semi-regular case}).

Case $c)$.~ $W(\chi_\sigma)= W_0$. We write $\mathcal{T}$ as a polynomial of $T_\gamma$ and $T_{\omega_1}$ (Proposition \ref{structure of Iwahori-Hecke: degenerate case}). If $f$ is supported in the cosets $\bigcup_{\vec{d}\in S_{Id}} KZ t_{\vec{d}} I_1$, then $\mathcal{T}$ can be replaced by a polynomial of $(T_\gamma T_{\omega_1})^2$ and $(T _\gamma T_{\omega_1} T_\gamma)^2$; otherwise, anything else would force $f$ to be supported outside $\bigcup_{\vec{d}\in S_{Id}} KZ t_{\vec{d}} I_1$ (Proposition \ref{action of IH: trivial case}). Note here we have assumed $a_1 < a_2$.

In every case, the claim follows by Theorem \ref{main theorem for GL_3 in appendix}.

We now assume $a_2 < -1$. Consider the subspace $M''$ of $(\textnormal{ind}^G _{KZ} \sigma)^{I_1}$ spanned by
 \begin{center}
$\{f_{Id, \vec{c}} \mid \vec{c}=(\ast, a_2 +1), \ast \leq a_2 +1\}$.
 \end{center}
The space $M''$ is infinitely dimensional, but by our claim above it has \emph{no} non-zero intersection with $M$. The argument is done.
\end{proof}

\section*{Acknowledgements}
A major part of this note was done when the author was a postdoc at Warwick Mathematics Institute (Leverhulme Trust RPG-2014-106) and Einstein Institute of Mathematics (ERC 669655), and he would like to thank both institutions for the hospitality.

\bibliographystyle{amsalpha}
\bibliography{new}

\end{document}